\documentclass[a4paper,12pt]{article}

\usepackage{amsmath,amsthm,amssymb}
\usepackage{enumerate}
\usepackage{graphicx}
\usepackage{titlesec}
\usepackage{empheq}
\usepackage{color,xcolor}
\usepackage{xfrac,bigints}
\usepackage{comment}
\usepackage{subfigure}
\usepackage{caption}
\usepackage{tocbibind}
\usepackage[toc,page]{appendix}
\usepackage[normalem]{ulem}
\usepackage{url}
\usepackage{float}
\usepackage{subcaption}

\usepackage[style=numeric-comp,sorting=nyt,backend=biber, maxbibnames=99]{biblatex}
\allowdisplaybreaks[3]

\usepackage{circuitikz}
\usepackage{tikz}
\usetikzlibrary{calc,patterns,decorations.pathmorphing,positioning}

\allowdisplaybreaks[4]

\def\claim#1{\begin{trivlist}\item[\hskip\labelsep\bf#1]\it}
\def\endclaim{\end{trivlist}}

\newtheorem{Th}{Theorem}[section]
\newtheorem{lemma}[Th]{Lemma}
\newtheorem{remark}[Th]{Remark}

\newtheorem{a priori condition}[Th]{A priori condition}

\titleformat*{\section}{\normalsize\bfseries}
\titleformat*{\subsection}{\normalsize\bfseries}
\renewcommand{\thesection}{{{\normalsize\arabic{section}}}}

\renewcommand{\theequation}{{{\thesection.\arabic{equation}}}}

\DeclareMathOperator*{\esssup}{ess\,sup}

\definecolor{purple}{rgb}{.75, 0, .25}

\makeatletter
\long\def\@makefntext#1{\parindent 1em\noindent
\@hangfrom{\hbox to 1.8em{\hss$^{\@thefnmark}$}}#1}
\makeatother

\numberwithin{equation}{section}

\begin{document}

\title{Uniqueness of dynamic elastography for isotropic standard linear solid viscoelastic media}

\author{Yu Jiang \thanks{School of Mathematics, Shanghai University of Finance and Economics, Shanghai, China (Email: jiang.yu@mail.shufe.edu.cn, ORCID: 0000-0003-4551-9335)}
\and
Ching-Lung Lin\thanks{Department of Mathematics, National Cheng-Kung University, Tainan 701, Taiwan (Email:
cllin2@mail.ncku.edu.tw)}\and
Gen Nakamura\thanks{Department of
Mathematics, Hokkaido University, Sapporo 060-0808, Japan, and Research Center of Mathematics for Social Creativity, Research Institute for Electronic Science, Hokkaido University, Sapporo 060-0812, Japan (Email: gnaka@math.sci.hokudai.ac.jp).
}}

\maketitle

\begin{abstract}
Dynamic elastography is a widely used, safe, convenient, and cost-effective method to aid in medical diagnosis. It visualizes the wave field propagating through living tissues and quantitatively determines the wave propagation speed from the acquired data, thereby enabling the extraction of the viscoelastic properties of in vivo tissues. Notably, this identification process relies on the mathematical modeling of the viscoelastic characteristics of living tissues. When living tissues are simply modeled as isotropic elastic media, J. McLaughlin and J. Yoon established the uniqueness of the identification in \cite{MY} by reasoning that they called the ``shrink and spread argument". Given the realistic viscoelastic nature of biological tissues, generalizing their results by adopting viscoelastic models is of great significance. In this paper, using their reasoning, we prove the uniqueness of identification for two typical viscoelastic media: the isotropic extended Maxwell model and the isotropic extended standard linear solid model. More precisely, we demonstrate that the shear wave speed within a region of interest $\Omega$ can be uniquely determined from a single measurement of the wave field in $\Omega$.

\bigskip

\noindent
{\bf Keywords:} dynamic elastography, extended Maxwell model, extended standard linear solid model, viscoelasticity, uniqueness, shrink and spread, unique continuation, finite propagation speed, interior regularity of solutions. \\

\noindent
{\bf MSC(2010): } 35A02, 35Q74, 35Q92, 35R09, 45Q05.
\end{abstract}

\renewcommand{\theequation}{\thesection.\arabic{equation}}

\section{Introduction}\label{introduction}
\label{sec1}
\setcounter{equation}{0}

Dynamic elastography is a biomedical imaging technique that enables quantitative evaluation of tissue stiffness in the region of interest (abbreviated as ROI) $\Omega\subset{\mathbb R}^3$ by generating waves in $\Omega$ and measuring them using focused ultrasound (see \cite{ue1,ue2,ue3}). This method is a non-invasive, simple, rapid, safe, and easy-to-use technique that aids in diagnosis. 
\begin{figure}[ht]
    \centering
  \includegraphics[width=0.8\linewidth]{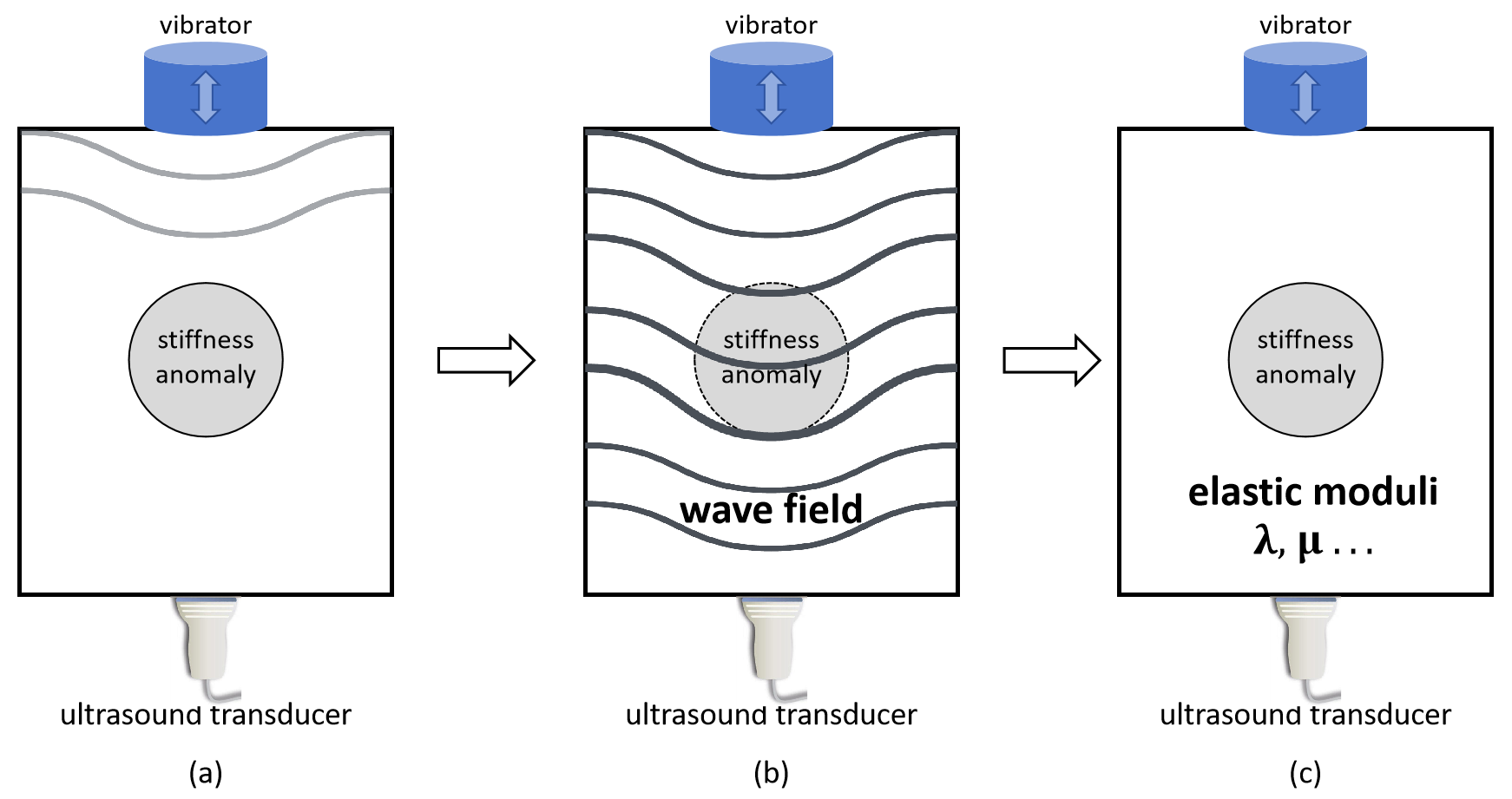}
\caption{Schematic diagram of dynamic elastography.}\label{fig:ue}
\end{figure}

The schematic diagram of dynamic elastography (Figure \ref{fig:ue}), adapted and illustrated according to the relevant literature \cite{ue1,ue2,ue3}, consists of the following: (a) A vibrator placed at the boundary of the ROI generates low-frequency vibrations (30–200 Hz) that propagate deep into the tissue. (b) Ultrasound transducer(s) provide image(s) of the wave field propagating within the ROI. (c) By solving an inverse problem of dynamic elastography, elastic moduli (e.g. $\lambda,\,\mu$) can be reconstructed.

Mathematical modeling of the mechanical properties of tissues is important in dynamic elastography to increase the precision of this method. We model it as the isotropic extended Maxwell model (abbreviated as isotropic EMM) and the isotropic extended standard linear solid model (abbreviated as isotropic ESLS), which are two typical viscoelastic models used by practitioners.

\begin{figure}[ht]  
	\centering 

\def\myscale{1} %1.0
%	\begin{subfigure}[c]{0.4\linewidth}
		\begin{tikzpicture}[scale=\myscale,transform shape,every node/.style={outer sep=0pt},thick,
				spring/.style={thick,decorate,decoration={zigzag,pre length=6*\myscale,
				post length=6*\myscale,segment length=14*\myscale,amplitude=10*\myscale}},
				dampic/.pic={\fill[white] (-0.1,-0.3) rectangle (0.3,0.3);
					\draw (-0.3,0.3) -| (0.3,-0.3) -- (-0.3,-0.3);
				\draw[line width=1mm] (-0.1,-0.3) -- (-0.1,0.3);}
			]
			\foreach \i\j\k\l in {01/1/0/3cm,02/2/-1.5/3cm,99/n/-3.5/3cm}
			%\foreach \i\j\k\l in {1/i/-1.5/2.5cm}
			{
				\node[coordinate] 
				at (0,\k)
				(localcenter\i) {};

				\node (rect) [rectangle,draw,dotted,minimum width=4cm,minimum height=1.2cm,
				] at (localcenter\i) {};
				\node [label={[label distance=1.8cm]5:$M_\j$}] at (localcenter\i) {};

				\node[coordinate,
				right=\l of localcenter\i] (localright\i) {};

				\node[coordinate,
				left=2cm of localcenter\i] (localmidleft\i) {};

				\node[coordinate,
				left=\l of localcenter\i] (localleft\i) {};

				\draw[thick] (localmidleft\i) to (localleft\i);

				\draw[spring] ([yshift=0mm]localmidleft\i.east) coordinate(aux)
				-- (localcenter\i.west|-aux) node[midway,above=1mm]{};

				\draw ([yshift=-0mm]localcenter\i.east) coordinate(aux')
				-- 
				(localright\i.west|-aux') pic[midway]{dampic} node[midway,below=3mm]{};
			}
			\draw [shorten >=0.8cm,shorten <=0.8cm,loosely dotted,thick] (localcenter02) to (localcenter99);
			\draw [shorten >=0.8cm,shorten <=0.8cm,loosely dotted,thick] (localright02) to (localright99);
			\draw [shorten >=0.8cm,shorten <=0.8cm,loosely dotted,thick] (localleft02) to (localleft99);
			\draw [shorten >=0.0cm,shorten <=-0.7cm,thick] (localleft02) to (localleft01);
			\draw [shorten >=0.0cm,shorten <=-0.7cm,thick] (localright02) to (localright01);
			\draw [shorten >=0.0cm,shorten <=1.3cm,thick] (localleft02) to (localleft99);
			\draw [shorten >=0.0cm,shorten <=1.3cm,thick] (localright02) to (localright99);
			\draw[thick] ($(localright02)!-0.2!(localcenter02)$) to (localright02);
			\draw[thick] ($(localleft02)!-0.2!(localcenter02)$) to (localleft02);
		\end{tikzpicture}%
%		\caption{Extended Maxwell model} \label{fig:b-EMM}  
%	\end{subfigure}
	\caption{Extended Maxwell model and its constituent $M_j$  (the Maxwell model), where the zigzag and piston represent a spring and dashpot, respectively. The extended standard linear solid model is obtained under the assumption that some, but not all, constituents lack a dashpot.}\label{fig:EMM}
\end{figure}
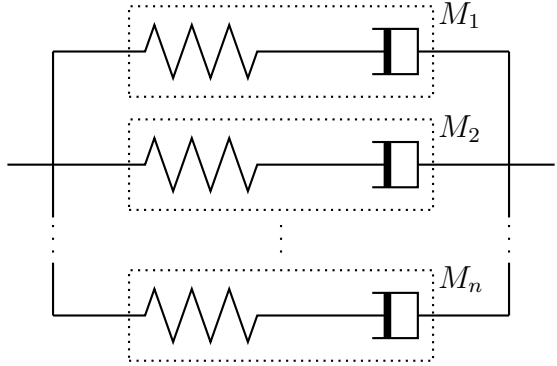

The EMM and ESLS are typical extended spring-dashpot models (abbreviated as ESD) described using springs and dashpots (see, for example, \cite{Christensen, Lakes}), and are used by practitioners to model the mechanical properties of viscoelastic media. As can be seen in Figure \ref{fig:EMM}, these are one-dimensional, isotropic, and homogeneous models. We refer to these models as SD, which we used only for the one dimensional case. Since we want to consider these models extended to multidimensional, anisotropic, and inhomogeneous cases, assuming that the viscosities of dashpots are scalar for simplicity, it is better to briefly explain the extension. For the homogeneous isotropic case, an extension argument can be given by using the volumetric and deviatoric decompositions of strain tensors (see \cite{Flugge}). These decompositions are orthogonal decompositions of the identity matrix acting on the set of 2-rank tensors. In addition, isotropic elastic tensors are commutative with respect to their multiplication. Hence, we can further generalize this argument by using the joint spectral decomposition of the elastic tensors of the springs in the models under the assumption that these tensors are commutative. Furthermore, it is possible to extend this without the assumption using the concept of a tensor network (\cite{ran2020tensor}). Taking this opportunity, we give a mathematical meaning to the ESD  using Figure \ref{fig:EMM}. First of all, for each unit $M_j$, the spectral decomposition of its elastic tensor provides SD over the projected spaces. Between two units with non-commutative elasticity tensors, the transformation rule between these respective units follows the rule under the change of basis between the respective spectral projected spaces of the units. Since this transformation rule is described by a set of 4-rank tensors acting on tensor products of 2-rank tensors by the contraction of 4-rank tensors, we can consider the ESD as a tensor network. Although there is no rigorous homogenization theory for viscoelastic media, it is natural to accept the inhomogeneity of the media simply by assuming that the mechanical properties of the media depend on position. Based on these arguments, it is logically allowed to formally extend homogeneous, isotropic EMM and ESLS in one dimension of space to inhomogeneous, anisotropic EMM and ESLS in two and three dimensions of space.

We first start by describing the background of our study. There is a landmark mathematical work on dynamic elastography by J. McLaughlin and J. Yoon (\cite{MY}). More precisely, they established a mathematical theory to identify the speed of the shear wave based on the shrink and spread argument, using measurements of elastic waves propagating in isotropic elastic living tissue. Mathematically, shrink and spread are the properties of solutions corresponding to finite speed propagation (abbreviated as FSP) and the unique continuation property (abbreviated as UCP), respectively. Subsequently, at the memorial mini-symposium of the 2019 AIP Conference in memory of J. McLaughlin, J. Yoon proposed several problems related to extending the shrink and spread argument to viscoelastic media. 

Based on the background mentioned for this study, this paper aims to solve some problems proposed by J. Yoon. We succeeded in showing the following two things. We give a direct and concise proof of the FPS for the anisotropic EMM and ESLS. The other is to give the UCP for quasi-static isotropic EMM and ESLS. Hence, combining these two things, we succeeded in extending the shrink and spread argument for isotropic EMM and ESLS. In other words, we can uniquely identify the speed of shear wave in the region of interest $\Omega$ by measuring a shear wave in $\Omega$. 

\begin{comment}
Although the shrink-related argument for finite propagation speed (abbreviated by FPS) appeared in \cite{McLaughlin-Yoon,MTY}, the full results have not been published elsewhere to date. We then observed that such an extension requires new ideas, particularly for establishing the spread part, that is, the unique continuation property (abbreviated as UCP), for a special time-dependent elliptic system integrated over the time interval $[0,t]$ for each $0\le t\le T$.
\end{comment}

The remainder of this paper is organized as follows. In Section 2, we present the stress–strain relation for ESLS. In Section 3, we provide a direct common proof of FSP for both anisotropic EMM and ESLS. For the spread argument, we first derive a convenient form of the stress–strain relation for the isotropic ESLS in Section 4. Using this formulation, we establish the spread argument in Section 5 by proving the unique continuation property for the corresponding elliptic system. Finally, using the shrink and spread arguments that we established for isotropic EMM and ESLS, the uniqueness result of the identification speed is given in Section 6, which generalizes the corresponding results given in \cite{MY} for isotropic elastic media. We give conclusion and discussion in Section 7. The Appendix after References presents results on the interior regularity of solutions for isotropic EMM and ESLS, which appear to be new.

Before closing this section, we remark that the results given in this paper are for three space dimensions, but they can also be given for two space dimensions using similar arguments.

\section{Stress-strain relations for EMM and ESLS}\label{stress-strain}\setcounter{equation}{0}
In this section, we derive the relaxation tensor, the total stress and strain relation, for the ESLS. Since the ESLS can be viewed as the EMM with the mentioned assumption, we first derive the relaxation tensor $G(x,t)$ for the EMM and then provide that for the ESLS by giving an interpretation for  $G(x,t)$, where $t\in [0,T]$ with $T>0$, and $x\in\Omega$. Here, we assume that the boundary $\partial\Omega$ of $\Omega$ is Lipschitz smooth and divided into two parts. More precisely, 
\begin{equation*}%\label{two parts}
\left\{
\begin{array}{ll}\Gamma_D,\,\Gamma_N\subset\partial\Omega:\,{\text{non-empty open subsets with Lipschitz boundaries}},\\
\partial\Omega=\overline{\Gamma_D}\cup\overline{\Gamma_N},\,\,\Gamma_D\cap\Gamma_N=\emptyset,
\end{array}
\right.
\end{equation*}
where $\overline \Gamma\subset\partial\Omega$ is the closure of $\Gamma$ for any $\Gamma\subset\partial\Omega$.

For each constituent $M_j$ of Figure \ref{fig:EMM}, the stress-strain relation of a spring described by Hooke's law and that of a dashpot is described as the stress is proportional to the instantaneous change of strain. More precisely, they are given as
\begin{equation}\label{local SS}
\sigma_j^s=C_je_j^s,\quad \sigma_j^d=\eta_j \dot{e}^d,
\end{equation}
where $C_j$ and $\eta_j$ are the elasticity tensor and viscosity, respectively. For a while, we assume that $C_j\in L^\infty(\Omega;{\mathbb R}^{3\times 3\times 3\times 3})$ and $\eta_j\in L^\infty(\Omega; {\mathbb R})$. In addition, the superscripts $s$ and $d$ of the stresses $\sigma_j^s,\,\sigma_j^d$ and the strains $e_j^s,\,e_j^d$ indicate that they are for spring and dashpot, respectively. In addition, ``$\dot{\,\,}$" denotes the time derivative. In addition, we assume the following physically natural assumption for the elasticity tensors and viscosities.

\medskip
\noindent
{\bf Assumption}$\,\,$
For each $1\le j\le n$, let $C_j(x)=((C_j)_{pqrs}(x))$ and $\eta_j(x)$ satisfy the following conditions.  
\begin{itemize}
\item[{\rm (i)}] (full symmetry) $$(C_j)_{pqrs}(x)=(C_j)_{rspq}(x)=(C_j)_{qprs}(x),\,\,\text{a.e. $x\in\Omega$, $1\le p,q,r,s\le 3$}.$$
\item[{\rm (ii)}] (strong convexity) There exists a constant $\delta>0$ such that
the inner product
$C_j(x){\color{magenta}:}(e_{pq}e_{rs})$ of $C_j(x)$ and $(e_{pq}e_{rs})$ satisfies \begin{equation*}%\label{Frobenious}
C_j(x){\color{magenta}:}(e_{pq}e_{rs}):=\sum_{p,q,r,s=1}^3 (C_j(x))_{pqrs}e_{pq}e_{rs}\ge\delta\sum_{p,q=1}^3 e_{pq}^2,\,\,\text{a.e.}\, x\in\Omega,\,\,\text{symmetric}\,(e_{pq}).
\end{equation*}
\item[{\rm(iii)}] (positivity)
\newline There exists a constant $\beta>0$ such that $\eta_j(x)\ge\beta\,\,\text{a.e. $x\in\Omega$}$. 
\end{itemize}

Now, we consider the stress-strain relation for an inhomogeneous anisotropic EMM.
Let $(\sigma_j,e_j)$ and $(\sigma,e)$ be the pair of stress and strain of $M_j$ and that of the total stress and total strain of the EMM. Then, we have
\begin{equation}\label{eq-*2}
\begin{cases}
e_j=e_j^s+e_j^d,\quad\sigma_j=\sigma_j^s=\sigma_j^d,\quad 1\le j\le n,\\
e=e_1=e_2=\cdots=e_n,\quad\sigma=\sigma_1+\sigma_2+\cdots+\sigma_n.
\end{cases}
\end{equation}
The total strain $e$ is given in terms of the displacement vector $u$ as
\begin{equation*}%\label{total strain}
    e=e[u]=\frac{1}{2}(\nabla u+(\nabla u)^{\mathfrak{t}}),
\end{equation*}
where ``$\mathfrak{t}$" denotes the transpose.
For simplicity, denote the viscous strains $e_j^d$, $j=1,2,\dots,n$ by $\phi_j$, $j=1,2,\dots,n$. By combining \eqref{local SS} and \eqref{eq-*2}, we have
\begin{equation}\label{eq-*3}
\left\{\begin{aligned}
& \sigma=\sum_{j=1}^n C_j(e-\phi_j),\\
& \eta_j\,\partial_t\phi_j=C_j(e-\phi_j),\quad j=1,2,\dots,n.
\end{aligned}\right.
\end{equation}
To clarify that $\sigma$ depends on $u$, we write $\sigma=\sigma[u]$.
By assuming $\phi_j(0)=0$, $j=1,2,\dots,n$, we derive
\begin{equation*}%\label{eq-*4}
\sigma(t)=\sum_{j=1}^n C_j\left\{e(t)-\int_0^t e^{-(t-s)\eta_j^{-1}C_j}\eta_j^{-1}C_j e(s)\,ds\right\},
\end{equation*}
which is equivalent to
\begin{equation}\label{VIDM}
\sigma(t)=\sigma_M(t):=\int_0^t G(t-s)\dot e(s)\,ds,\,\,\,G(t):=\sum_{j=1}^n C_j e^{-t\eta_j^{-1}C_j}
\end{equation}
under assumption $e(t)=0$ at $t=0$. We put the subscript $M$ to indicate that $\sigma_M(t)$ is the total stress for the EMM. $G(t)$ is called the relaxation tensor for the EMM.

\medskip
Next, we derive the relaxation tensor for ESLS simply by pointing out the necessary changes in the above derivation of $G(t)$. Since the relation between total stress and strain is independent of the labeling order of $M_j$'s, without loss of generality, we can assume that $M_j$ do not have dashpots so that $\sigma_j^d=e_j^d=\phi_j=0,\,n_1+1\le j\le n$, where $n_1\in{\mathbb N},\,n_1<n$. Then, we need to change the first equation of \eqref{eq-*2} and \eqref{eq-*3} as
\begin{equation*}%\label{ceq-*2}
\left\{
\begin{array}{ll}
e_j=e_j^s+e_j^d\,\,(1\le j\le n_1),\,\,\,&e_j=e_j^s\,\,(n_1+1\le j\le n),\\
\sigma_j=\sigma_j^s=\sigma_j^d\,\,(1\le j\le n_1),\,\,\,&\sigma_j=\sigma_j^s\,\,(n_1+1\le j\le n).
\end{array}
\right.
\end{equation*}
and
\begin{equation*}%\label{ceq-*3}
\left\{
\begin{aligned}%{ll}
&\sigma=\sigma_S(t)=\sum_{j=1}^{n}C_j(e-\phi_j)=\sum_{j=1}^{n_1}C_j(e-\phi_j)+\sum_{j=n_1+1}^n C_je,\\
&\eta_j\partial_t\phi_j=C_j(e-\phi_j),\,\,1\le j\le n_1,
\end{aligned}
\right.
\end{equation*}
respectively. Here, note that the ESLS only has $n_1$ number of viscous strains $\phi_j$, $j=1,2,\cdots n_1$. By direct computations, we have that
\begin{equation}\label{VIDS}
\left\{\begin{aligned}%{ll}
&\sigma(t)=\sigma_S(t)=\int_0^t G(t-s)\dot e(s)\,ds=G(0)e(t)+\int_0^t \dot{G}(t-s)e(s)\,ds,\\
&G(t):=\sum_{j=1}^{n_1} C_je^{-t\eta_j^{-1}C_j}+\sum_{j=n_1+1}^n C_j,
\end{aligned}
\right.
\end{equation}
where the subscript $S$ for $\sigma_S$ indicates that it is for the ESLS, and $G(t)$ is called the relaxation tensor for the ESLS.  

Regarding the dynamic elastography for anisotropic EMM and ESLS, there exists a respective unique solution $u\in C^2([0,T]; H^1(\Omega))$ with its third derivative $t$ in $C^0([0,T]; L^2(\Omega))$ to the initial boundary value problem with mixed-type boundary conditions, which satisfies
\begin{equation}\label{IBP}
\left\{
\begin{array}{ll}
\rho\partial_t^2 u-\text{div}\,\sigma[u]=0,\\
u=g\in C^2([0,T];\dot{H}^{1/2}(\overline{\Gamma_D}))\,\,\text{over $\Gamma_D$},\,\,\,\sigma[u]\nu=0\,\,\text{over $\Gamma_N$},\\
(u,\partial_t u)=(u^0,v^0)\,\,\text{at $t=0$}
\end{array}
\right.
\end{equation}
for each corresponding $\sigma[u]=\sigma_M(t),\,\sigma_S(t)$, if $g, u^0,\,v^0$ satisfies the compatibility condition of order 2 (see the references \cite{Daf, DLN,DKLN} cited in order according to their importance). Here, $\nu$ is the outer unit normal of $\partial\Omega$, and $\dot{H}^{1/2}(\overline{\Gamma_D})$ is the set of elements in $H^{1/2}(\partial\Omega)$ supported in $\overline{\Gamma_D}$. We have omitted the fact that the first equation of \eqref{IBP} is considered in $\Omega\times(0,T)$. Further, we remark that the existence proof of the solution for \eqref{IBP} can be done for each small subsequent interval of the time interval
starting from $[0,t_1]$ with small $t_1$. For instance, in the small interval next to $[0,t_1]$, $\int_0^t\dot{G}(t-s)e(s)ds$ and the initial time $t=0$ have to be replaced by $\int_{t_1}^t\dot{G}(t-s)e(s)ds$ and $t=t_1$, respectively. Hence
$G(0)+\int_{t_1}^t\dot{G}(t-s)\,ds$ for $t_1\le t\le t_2$ with small $t_2-t_1$ acting to $e$ as $\big((G(0)+\int_{t_1}^t\dot{G}(t-s)\,ds)e\big)(t):=G(0)e(t)+\int_{t_1}^t\dot{G}(t-s)e(s)\,ds$ satisfies the strong convexity condition. Repetition of this argument finishes the proof. Then, the time regularity of the solution can be transformed to the interior space regularity of the solution by the standard interior regularity argument for elliptic systems. This argument works not only for the EMM and ESLS but also for general linear viscoelasticity systems discussed in \cite{Daf} and its modification to apply the discussion to the mixed-type boundary conditions \cite{DLN}. Based on this remark, we can have $u\in C^2([0,T]:H^1(\Omega))$ if the following conditions are satisfied: 
\begin{enumerate}
    \item[(H1)] $g\in C^2([0,T]; \dot{H}^r(\overline{\Gamma_D}))$ with $r=5/2+\epsilon,\,0<\epsilon\ll 1$ such that $ g=\tilde{g}|_{\Gamma_D\times[0,T]}$ for a $\tilde g\in C^2([0,T]; H^3(\Omega))$ satisfying $\tilde g=\partial_t \tilde g=0$ at $t=0$.
    \item[(H2)]  $\{\tilde{u}|_{t=0}=0,\,\partial_t\tilde{u}|_{t=0}=0,\, f\}$ satisfies the compatibility condition of order 2 for the abstract Cauchy problem for the equation $$\rho\partial_t^2\tilde u=(\mathcal{L}\tilde u)(t)+f(t),$$
where
$$\tilde u:=u-\tilde g,\,
f:=-\rho\partial_t^2\tilde g+\mathcal{L}\tilde g,$$
with the operator $\mathcal{L}$ given as $$(\mathcal{L}\tilde g)(t):=\nabla \cdot \Big (G(0)e[\tilde g](t)+\int_0^t\dot{G}(t-s)e[\tilde g](s)\,ds\Big )$$
including the homogeneous mixed type boundary condition.
\end{enumerate}

\par
From the derivation of the relaxation tensors, \eqref{IBP} is equivalent to
\begin{equation}\label{equiv-IBP}
\left\{
\begin{array}{ll}
\rho\,\partial_t^2 u-\text{div}\,\sigma[u,\phi]=0,\\
\eta_j\partial_t\phi_j-\sigma_j[u,\phi_j]=0,\,\,1\le j\le n_1,\\
u=g\,\,\text{over $\Gamma_D$},\,\,\sigma[u,\phi]\nu=0 \,\,\,\text{over $\Gamma_N$},\\
(u,\partial_t u, e[u], \phi)=(u^0,v^0,0,0),
\end{array}
\right.
\end{equation}
where for our convenience, we have put $\phi:=(\phi_1,\cdots,\phi_n)=(\phi_1,\cdots,\phi_{n_1},0,\cdots,0)$ with $n_1=n$ and $1\le n_1<n$ for EMM and ESLS, respectively. Here, $\sigma[u,\phi]:=\sum_{j=1}^n\sigma_j[u,\phi_j]$ and each $\sigma_j[u,\phi_j]:=C_j(e[u]-\phi_j)$.

\section{FSP for EMM and ESLS}\label{EMM}\setcounter{equation}{0}

In this section, we provide a direct and concise common proof of the FSP for both the EMM and the ESLS. Before our study, there was a systematic study of the FSP for Boltzmann-type integrodifferential viscoelastic systems in \cite{McLaughlin-Yoon} based on a density formulation. Our results are in line with their results for the EMM and the ESLS. To begin with, consider the solution $u$ of \eqref{equiv-IBP}. Put $v:=\partial_t u$ and $\psi=(\psi_1,\cdots,\psi_n)=e[u]\tilde I-\phi$ with a $3n\times3n$ identity matrix $\tilde I$. Then, we have
\begin{equation}\label{psi_eqs}
\left\{
\begin{aligned}
&\partial_t\psi_j=-\eta_j^{-1}C_j\psi_j+e[v]\,\,\,(1\le j\le n_1),\\
\\
&\partial_t\psi_j=e[v]\,\,\,(n_1+1\le j\le 
n)
\end{aligned}
\right.
\end{equation}
with $n_1$ given after \eqref{equiv-IBP}.

Now, let $D_\alpha(t):=\{x\in{\mathbb R}^3: |x-x_0|\le R-\alpha t\}\subset\Omega$ {with $\alpha>0$} which will be specified later. Define $E_\alpha(v,\psi)(t)$ for $t\ge0$ so that $D_\alpha(t)\not=\emptyset$ by
\begin{equation*}%\label{f3.2}
\begin{aligned}
E_{\alpha}(v,\psi)(t):=&\int_{D_{\alpha}(t)}\Big\{\frac{\rho}{2}|v|^2+\frac{1}{2}\sum_{j=1}^n (C_j\psi_j,\psi_j)\Big\} dt dx\\
=&\frac{1}{2}\int_{D_{\alpha}(t)}\Big\{\rho|\partial_t u|^2+\sum_{j=1}^n\sigma_j[u,\phi_j]:(e[u]-\phi_j)\Big\} dt dx
\end{aligned}
\end{equation*}
with $``:"$ on the right side denoting the inner product of Frobenius. Here and hereafter, we suppress the integration notation at the end of integrals to avoid introducing new notation as much as possible. Also, we abuse the notations $(\,\,,\,)$ and $|\,\,\,\,|$ to denote the Frobenius inner product and the associated norm, respectively, unless they are clear from the context. 

\medskip
Then, we prepare the following lemma.

\medskip\noindent

\begin{lemma}\label{f_jem3.1}
\begin{equation*}%\label{f3.3}
\begin{aligned}
\frac{d}{dt}E_{\alpha}(v,\psi)(t)=&\int_{L_{\alpha}(t)}-\frac{\alpha}{2}\{\rho|v|^2+({C}\psi,\psi)\} +\int_{L_{\alpha}(t)}\big((\sum_{j=1}^n C_j\psi)\nu,\,v\big)\\
&-\int_{D_{\alpha}(t)}\sum_{j=1}^{n_1}\eta_j\big|e[v]-\partial_t\psi_j\big|^2.
\end{aligned}
\end{equation*}
\end{lemma}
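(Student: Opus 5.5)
I will differentiate $E_\alpha(v,\psi)(t)$ directly, using the Reynolds transport theorem for the shrinking ball $D_\alpha(t)$, whose boundary $L_\alpha(t):=\partial D_\alpha(t)$ moves inward with normal speed $\alpha$. This splits the derivative into two parts:
\begin{equation*}
\frac{d}{dt}E_\alpha(t)=-\frac{\alpha}{2}\int_{L_\alpha(t)}\Big\{\rho|v|^2+\sum_{j=1}^n(C_j\psi_j,\psi_j)\Big\}+\int_{D_\alpha(t)}\Big\{\rho(\partial_t v,v)+\sum_{j=1}^n(C_j\psi_j,\partial_t\psi_j)\Big\},
\end{equation*}
where I have used the full symmetry (i) of $C_j$ to write $\frac12\partial_t(C_j\psi_j,\psi_j)=(C_j\psi_j,\partial_t\psi_j)$. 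The boundary term is the first term of the lemma, with $(C\psi,\psi)$ read as $\sum_j(C_j\psi_j,\psi_j)$.

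Next I treat the kinetic part. Since $\sigma[u,\phi]=\sum_j C_j(e[u]-\phi_j)=\sum_j C_j\psi_j$, the equation of motion in \eqref{equiv-IBP} gives $\rho\partial_t v=\mathrm{div}\sum_j C_j\psi_j$. Integrating by parts over $D_\alpha(t)$, using the symmetry of the stress to replace $\nabla v$ by $e[v]$, yields
\begin{equation*}
\int_{D_\alpha(t)}\rho(\partial_t v,v)=\int_{L_\alpha(t)}\Big(\big(\sum_j C_j\psi_j\big)\nu,v\Big)-\int_{D_\alpha(t)}\sum_{j=1}^n(C_j\psi_j,e[v]).
\end{equation*}

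It remains to combine the volume terms into $\sum_j(C_j\psi_j,\partial_t\psi_j-e[v])$, and I evaluate this using \eqref{psi_eqs}. For $n_1+1\le j\le n$ we have $\partial_t\psi_j=e[v]$, so those terms vanish. For $1\le j\le n_1$ we have $C_j\psi_j=\eta_j(e[v]-\partial_t\psi_j)$, so each term equals $-\eta_j|e[v]-\partial_t\psi_j|^2$. This gives the dissipation term, and the identity is established.

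The only delicate point is justifying the differentiation and integration by parts at the regularity available, namely $u\in C^2([0,T];H^1)$ together with the interior regularity from the Appendix. The transport formula requires traces of $v$ and $\psi$ on the spheres $L_\alpha(t)$. I would first prove the identity for smooth approximations, or in integrated form over $[t_1,t_2]$ using the coarea formula, and then pass to the limit. I expect this step to be routine, with the main care going into the trace of $\sigma\nu$ on $L_\alpha(t)$.
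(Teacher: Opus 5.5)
Your proposal is correct and follows essentially the paper's route. The paper likewise differentiates $E_\alpha$ over the shrinking ball, picking up the $-\frac{\alpha}{2}$ surface term, integrates $\rho(\partial_t v,v)$ by parts via the equation of motion, and then uses \eqref{psi_eqs} to produce the dissipation term. Your direct substitution $C_j\psi_j=\eta_j(e[v]-\partial_t\psi_j)$ into $(C_j\psi_j,\partial_t\psi_j-e[v])$ is a tidier form of the paper's expand-and-complete-the-square step, and it avoids the sign slip in the paper's intermediate line, which should read $-2\eta_j(e[v],\partial_t\psi_j)+\eta_j|e[v]|^2$.
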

\begin{proof}
From the first equation of \eqref{equiv-IBP}, we have the following.
\begin{equation}\label{f3.2}
\begin{aligned}
\int_{D_{\alpha}(t)}\rho(\partial_tv,v)&=\int_{D_{\alpha}(t)}(\nabla \cdot \sum_{j=1}^n C_j\psi_j,v)\\
&=-\sum_{j=1}^n\int_{D_{\alpha}(t)}(C_j\psi_j,e[v])+\int_{L_{\alpha}(t)}\big((\sum_{j=1}^n C_j\psi)\nu,\,v\big).
\end{aligned}
\end{equation}
Also, from  \eqref{psi_eqs}, we have 
\begin{equation}\label{f3.3}
\left\{
\begin{array}{ll}
(C_j\psi_j,\partial_t\psi_j)=-\eta_j|\partial_t\psi_j|^2+(\eta_je[v],\partial_t\psi_j)\,\,(1\le j\le n_1),\\
\\(C_j\psi_j,\partial_t\psi_j)=(C_j\psi_j,e[v])\,\,(n_1+1\le j\le n).
\end{array}
\right.
\end{equation}
By \eqref{f3.2} and \eqref{f3.3}, direct computations yield
\begin{align*}
&\frac{d}{dt}E_{\alpha}(v,\psi)(t)\\
=&\int_{D_{\alpha}(t)}\Big \{\rho(\partial_tv,v)+\sum_{j=1}^n(C_j\psi_j,\partial_t\psi_j)\Big \}-\int_{L_{\alpha}(t)}\frac{\alpha}{2}\Big\{\rho|v|^2+\sum_{j=1}^n(C_j\psi_j,\psi_j)\Big\}\\
=&-\int_{D_{\alpha(t)}}\Big\{\sum_{j=1}^n(C_j\psi_j,e[v])+\sum_{j=1}^{n_1}\eta_j|\partial_t\psi_j|^2-\sum_{j=1}^{n_1}(\eta_je[v],\partial_t\psi_j)-\sum_{i=n_1+1}^n(C_j\psi_j,e[v])\Big\}\\
&-\int_{L_{\alpha}(t)}\frac{\alpha}{2}\Big\{\rho|v|^2+\sum_{j=1}^n(C_j\psi_j,\psi_j)\Big\}+\int_{L_{\alpha}(t)}\Big((\sum_{j=1}^n C_j\psi_j)\nu,\,v\Big)\\
=&-\int_{D_{\alpha(t)}}\Big\{\sum_{j=1}^{n_1}\eta_j|\partial_t\psi_j|^2+ \sum_{j=1}^{n_1}2\eta_j(e[v],\partial_t\psi_j)-\sum_{j=1}^{n_1}\eta_j|e[v]|^2\Big\}\\
&-\int_{L_{\alpha}(t)}\frac{\alpha}{2}\Big\{\rho|v|^2+\sum_{j=1}^n(C_j\psi_j,\psi_j)\Big\}+\int_{L_{\alpha}(t)}\Big((\sum_{j=1}^n C_j\psi_j)\nu,\,v\Big)\\
=&-\int_{D_{\alpha(t)}}\sum_{j=1}^{n_1}\eta_j|e[v]-\partial_t\psi_j|^2-\int_{L_{\alpha}(t)}\frac{\alpha}{2}\Big\{\rho|v|^2+\sum_{j=1}^n(C_j\psi_j,\psi_j)\Big\}\\
&+\int_{L_{\alpha}(t)}\Big((\sum_{j=1}^n C_j\psi_j)\nu,\,v\Big).
\end{align*}
This ends the proof.
\end{proof}

\medskip
Next, let us give the FSP common for both the EMM and the ESLS. We first start by defining the norms $\Vert C_j\Vert_{x}$ and $\Vert Z\Vert_x$ for $\text{a.e}\,\, x\in B_R(x_0)$ by
\begin{equation}\label{norms Cx_Zx}
\Vert C_j\Vert_{x}:=\sup_{|\zeta_j|\neq 0}\frac{|(C_j\zeta_j):\zeta_j|}{|\zeta_j|^2},\,\,\, \Vert Z\Vert_{x}:=\sum_{j=1}^n\Vert C_j\Vert_{x},
\end{equation}
where $\zeta_j,\,j=1,2,\cdots,n$ are 2-rank tensors, and $B_R(x_0):=\{x\in \Omega : |x-x_0|\leq R\} \subset \Omega$.

\medskip
Now we are ready to state the FSP as follows.

\begin{Th}\label{f_thm3.2} The unspecified $\alpha$ can be specified as 
\begin{equation*}%\label{f3.8}
\begin{aligned}
\alpha=\max_{x\in B_R(x_0)}\sqrt{\Vert Z\Vert_{x}/\rho(x)}
\end{aligned}
\end{equation*}
to give the FSP stated as 
\begin{equation}\label{def_FSP}
u=0\,\,\,\text{in}\,\,\, \bigcup_{0<t<s}B_{R-\alpha t}(x_0)\,\,\,\text{for any}\,\,\, s\in(0, R/\alpha)
\end{equation}
whenever $B_R(x_0)\subset\Omega$ and $u=v=0$ in $B_R(x_0)$ at $t=0$.
\end{Th}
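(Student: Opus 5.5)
The plan is the standard energy argument on the shrinking cones $D_\alpha(t)$, using Lemma \ref{f_jem3.1}. In Lemma \ref{f_jem3.1} the integrals over $L_\alpha(t)$ are boundary integrals over the sphere $\partial D_\alpha(t)=\{|x-x_0|=R-\alpha t\}$, and $\nu$ is its outer normal. The goal is to show that $\frac{d}{dt}E_\alpha(v,\psi)(t)\le 0$ for $0<t<R/\alpha$. Since $u=v=0$ in $B_R(x_0)$ at $t=0$, and $\phi=0$ at $t=0$ by \eqref{equiv-IBP}, we have $\psi_j=e[u]-\phi_j=0$ there, so $E_\alpha(v,\psi)(0)=0$. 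Strong convexity (Assumption (ii)) makes $E_\alpha\ge0$, so $E_\alpha\equiv0$ on $[0,s]$. Hence $v=\partial_t u=0$ in $D_\alpha(t)$ for each $t$. Because $D_\alpha(t)$ shrinks, every point of $D_\alpha(t)$ lies in $D_\alpha(\tau)$ for all $\tau\le t$. Integrating $\partial_t u=0$ in time from the zero initial datum then gives $u=0$ there, which is \eqref{def_FSP}.

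The key step is to control the boundary flux term. Pointwise on $L_\alpha(t)$, write $\sum_j C_j\psi_j=\sum_j C_j^{1/2}(C_j^{1/2}\psi_j)$, where $C_j^{1/2}$ is the positive square root of $C_j$ viewed as a symmetric positive operator on symmetric 2-tensors. Applying Cauchy--Schwarz twice gives
\[
\Big|\Big(\big(\textstyle\sum_j C_j\psi_j\big)\nu,v\Big)\Big|\le \textstyle\sum_j |C_j^{1/2}\psi_j|\,|C_j^{1/2}(v\otimes\nu)^{\rm sym}|\le \Big(\sum_j (C_j\psi_j,\psi_j)\Big)^{1/2}\Big(\sum_j\Vert C_j\Vert_x\Big)^{1/2}|v|.
\]
Here I use $|(v\otimes\nu)^{\rm sym}|\le|v|$ and $(C_j\zeta,\zeta)\le\Vert C_j\Vert_x|\zeta|^2$. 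By Young's inequality the right-hand side is at most
\[
\tfrac{\alpha}{2}\textstyle\sum_j(C_j\psi_j,\psi_j)+\tfrac{1}{2\alpha}\Vert Z\Vert_x|v|^2\le \tfrac{\alpha}{2}\big\{\rho|v|^2+\sum_j(C_j\psi_j,\psi_j)\big\},
\]
where the last inequality uses $\Vert Z\Vert_x\le \alpha^2\rho(x)$, which is exactly the choice of $\alpha$. So the first two terms in Lemma \ref{f_jem3.1} sum to something nonpositive. The dissipative term $-\int\sum_{j\le n_1}\eta_j|e[v]-\partial_t\psi_j|^2$ is nonpositive by Assumption (iii). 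Hence $\frac{d}{dt}E_\alpha\le0$.

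The main obstacle is regularity. Lemma \ref{f_jem3.1} needs traces of $v$ and $\psi$ on the spheres $L_\alpha(t)$, and it needs $t\mapsto E_\alpha(t)$ to be differentiable. With only $u\in C^2([0,T];H^1)$ these traces need care. I would handle this in one of two ways. The first is to integrate the identity in $t$, so that the boundary terms become a volume integral over the lateral conical surface, which is fine for $H^1$ functions by the coarea formula. The second is to use the interior regularity results from the Appendix, or a mollification in $x$ applied to the (linear) system and then a passage to the limit. A smaller point is that the maximum defining $\alpha$ should be read as an essential supremum, since the coefficients are only $L^\infty$; I also assume $\rho$ is bounded below.
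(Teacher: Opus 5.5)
Your proposal is correct and follows the paper's proof. Both apply Lemma \ref{f_jem3.1}, control the flux term through $C_j^{1/2}$ and $\Vert C_j\Vert_x$, and absorb it into $-\frac{\alpha}{2}\int_{L_\alpha(t)}\{\rho|v|^2+\sum_j(C_j\psi_j,\psi_j)\}$ using $\Vert Z\Vert_x\le\alpha^2\rho$; the paper splits $\rho|v|^2$ with weights $\Vert C_j\Vert_x/\Vert Z\Vert_x$ and completes a square for each $j$, whereas you use Cauchy--Schwarz over $j$ and a single Young inequality, which is only a cosmetic difference. Your explicit passage from $E_\alpha\equiv 0$ to $u=0$ on the shrinking balls, and your remarks on traces and on reading the maximum as an essential supremum, fill in details the paper leaves implicit.
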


\begin{proof}
Write $(C_j\psi_j,\psi_j)$ as
\begin{equation*}%\label{f3.10}
\begin{aligned}
(C_j\psi_j,\psi_j)= C_j^{1/2}\psi_j : C_j^{1/2}\psi_j.
\end{aligned}
\end{equation*}
\noindent
By direct computations, we have that
\begin{equation}\label{f3.6}
\begin{aligned}
|\big( C_j\psi_j)\nu,\,v\big)|\leq |C_j\psi_j||v|= |v|\big(|C_j\psi_j|^2\big)^{1/2}.
\end{aligned}
\end{equation}
We further have that
\begin{equation}\label{f3.7}
\begin{aligned}
|C_j\psi_j|^2\leq (C_j^{1/2}(C_j^{1/2}\psi_j),C_j^{1/2}(C_j^{1/2}\psi_j))
\leq \Vert C_j\Vert_{x}\, (C_j\psi_j,\psi_j).
\end{aligned}
\end{equation}
Then, combining \eqref{f3.6} and \eqref{f3.7}, we have
\begin{equation}\label{f3.8}
\begin{aligned}
|\big(( C_j\psi_j)\nu,\,v\big)|\leq |v||Y_j|\Vert C_j\Vert^{\frac12}_x\,\,\,\text{a.e. $x\in B_R(x_0)$},
\end{aligned}
\end{equation}
where $|Y_j|^2:=(C_j\psi_j,\psi_j)$.
Furthermore, by Lemma \ref{f_jem3.1}, \eqref{norms Cx_Zx} and \eqref{f3.8}, we have
\begin{equation}\label{f3.9}
\begin{aligned}
\frac{d}{dt} E_\alpha(t)(v,\psi)&\leq \int_{L_{\alpha}(t)}-\frac{\alpha}{2}\Big\{\rho|v|^2+\sum_{j=1}^n(C_j\psi_j,\psi_j)\Big\}+\int_{L_{\alpha}(t)}\Big((\sum_{j=1}^n C_j\psi_j)\nu,\,v\Big)\\
&=\sum_{j=1}^n\int_{L_{\alpha}(t)}\Big\{-\frac{\alpha}{2}\{\rho|v|^2\frac{\Vert C_j\Vert_{x}}{\Vert Z\Vert_{x}}+(C_j\psi_j,\psi_j)\}+\big(( C_j\psi_j)\nu,\,v\big)\Big\}\\
&\le\sum_{j=1}^n\int_{L_{\alpha}(t)}\Big\{-\{\frac{\alpha}{2}\left(\frac{1}{\alpha^2}\Vert C_j\Vert_{x}|v|^2+|Y_j|^2\right)\}+|v||Y_j|\Vert C_j\Vert^{\frac12}_x\Big\}\\
&=\sum_{j=1}^n\int_{L_{\alpha}(t)}\Big\{-\{\frac{1}{2}\left(\frac{1}{\alpha}\Vert C_j\Vert_{x}|v|^2+\alpha|Y_j|^2\right)\}+|v||Y_j|\Vert C_j\Vert^{\frac12}_x\Big\}\\
&=-\frac{1}{2}\sum_{j=1}^n 
\int_{L_{\alpha}(t)}\left(\alpha^{-1/2}\Vert C_j\Vert^{1/2}_x|v|-\alpha^{1/2}|Y_j|\right)^2\\
&\leq 0. 
\end{aligned}
\end{equation}
Since \eqref{f3.9} implies
\begin{equation*}%\label{f3.15}
E_{\alpha}(v,\psi)(t)\leq E_{\alpha}(v,\psi)(0)   
\end{equation*}
for $t\geq 0$ such that $D_\alpha(t)\not=\emptyset$,
we immediately have \eqref{def_FSP}.
\end{proof}

\section{Convenient form of stress-strain relations for isotropic EMM and ESLS}\label{I-EMM}\setcounter{equation}{0}

In this section, we derive a convenient form of the relaxation tensor for isotropic EMM and ESLS, which is essential for the arguments in the next section. To begin with, let the elasticity tensor $C_j=((C_j)_{pqrs})$ be
\begin{equation*}%
\begin{aligned}
(C_j)_{pqrs}={\lambda}_j\delta_{pq}\delta_{rs}+{\mu}_j(\delta_{qs}
\delta_{pr}+\delta_{ps}\delta_{qr}),\,\,\,1\le p,\,q,\,r,\,s\le 3
\end{aligned}
\end{equation*}
with the Kronecker delta $\delta_{pq}$, where $\lambda_j,\,\mu_j\in L^\infty(\Omega)$ $(1\le j\le n)$ are the Lam\'e moduli that satisfy the strong convexity condition
\begin{equation}\label{iso_s-convex}
3\lambda_j+2\mu_j\ge\tilde\delta,\,\,\,\mu_j\ge\tilde\delta,\,\,\, j=1,\cdots, n,\,\,\,\text{a.e. in $\Omega$}
\end{equation}
with a positive constant $\tilde\delta$.

\begin{comment}
Taking \eqref{ceq-*2}, \eqref{ceq-*3} and \eqref{equiv-IBP} into account, consider
\begin{equation}\label{f4.3}
\left\{
\begin{array}{l}
\rho\partial_t^2u-\nabla \cdot \sigma[u,\phi]=0,\\
\eta_j\partial_t\phi_j-\sigma_j[u,\phi_j]=0,\,\,\,1\le j\le n_1,\\
u=0\quad {\rm over}\quad\Gamma_D,\\
\sigma[u,\phi]\nu=0 \quad {\rm over}\quad \Gamma_N, \\
(u,\partial_tu,e[u],\phi)|_{t=0}=(u^0,v^0,0,0)
\end{array}
\right.
\end{equation}
with $\phi:=(\phi_1,\cdots,\phi_{n_1})$.
\end{comment}

For $\phi:=(\phi_1,\cdots,\phi_n)=(\phi_1,\cdots,\phi_{n_1},0,\cdots,0)$ with $n_1=n$ and $1\le n_1<n$ in \eqref{equiv-IBP}, we define the volumetric viscous strain $\phi_j^V$ and the deviatoric viscous strain $\phi_j^D$ as
\begin{equation*}
\left\{
\begin{aligned}
\phi^V_j&:=\frac{1}{3}\sum_{k=1}^3(\phi_j)_{kk}I_3=\frac{1}{3}{\rm tr} (\phi_j) I_3,\\
\phi^D_j&:=\phi_j-\phi^V_j,
\end{aligned}
\right.
\end{equation*}
where ${\rm tr}(\phi_j)$ denotes the trace of $\phi_j$ and $I_3$ is the identity matrix of $3\times3$.
By direct computations, we have the following.
\begin{equation}\label{f4.1}
\begin{aligned}
\sigma_j[u,\phi_j]=C_j(e[u]-\phi_j)=\lambda_j(\nabla\cdot u)I_3+2\mu_je[u]-(3\lambda_j+2\mu_j)\phi^V_j-2\mu_j\phi^D_j.
\end{aligned}
\end{equation}
Combining the second equation of \eqref{equiv-IBP} and \eqref{f4.1}, we have for $1\leq j \leq n_1$ that
\begin{equation}\label{f4.2}
\begin{aligned}
\eta_j\partial_t\phi^V_j+\eta_j\partial_t\phi^D_j=\lambda_j(\nabla\cdot u)I_3+2\mu_je[u]-(3\lambda_j+2\mu_j)\phi^V_j-2\mu_j\phi^D_j.
\end{aligned}
\end{equation}
Taking a trace on the second equation of \eqref{equiv-IBP}, we have 
\begin{equation}\label{f4.3}
\begin{aligned}
\eta_j\partial_t({\rm tr} (\phi_j))=(3\lambda_j+2\mu_j)(\nabla\cdot u)-(3\lambda_j+2\mu_j){\rm tr} (\phi_j)
\end{aligned}
\end{equation}
for $1\leq j \leq n_1$. From $\phi(0)=0$, \eqref{f4.2} and \eqref{f4.3}, we obtain for $1\leq j \leq n_1$ that
\begin{equation}\label{f4.4}
\left\{
\begin{aligned}
\eta_j\partial_t\phi^V_j&=\frac{3\lambda_j+2\mu_j}{3}(\nabla\cdot u)I_3-(3\lambda_j+2\mu_j)\phi^V_j,\\
\phi^V_j(0)&=0
\end{aligned}
\right.
\end{equation}
and
\begin{equation}\label{f4.5}
\left\{
\begin{aligned}
\eta_j\partial_t\phi^D_j&=2\mu_je[u]-\frac{2\mu_j}{3}(\nabla\cdot u)I_3-2\mu_j\phi^D_j,\\
\phi^S_j(0)&=0.
\end{aligned}
\right.
\end{equation}
Then, solving $\phi^V_j$ and $\phi^D_j$ from \eqref{f4.4} and \eqref{f4.5}, we obtain that 
\begin{equation}\label{f4.6}
\left\{
\begin{aligned}
\phi^V_j&=\int_0^te^{-(3\lambda_j+2\mu_j)\eta_j^{-1}(t-s)}\,\frac{3\lambda_j+2\mu_j}{3\eta_j}(\nabla\cdot u)(s)\,ds I_3,\\
\phi^D_j&=\int_0^t\big(e^{-2\mu_j\eta_j^{-1}(t-s)}\,2\mu_j\eta_j^{-1}e[u](s)-e^{-2\mu_j\eta_j^{-1}(t-s)}\,\frac{2\mu_j}{3\eta_j}(\nabla\cdot u)(s)I_3\big)\,ds
\end{aligned}
\right.
\end{equation}
for $1\le j\le n_1$
and $\phi_j^V=\phi_j^D=0$ for $n_1+1\le j\le n$.
Hence, combining \eqref{f4.1} and \eqref{f4.6}, we obtain that
\begin{equation}\label{f4.7}
\begin{aligned}
\sigma[u,\phi]=\sum_{j=1}^n\{\lambda_j(\nabla\cdot u)I_3+2\mu_je[u]-(3\lambda_j+2\mu_j)\phi^V_j-2\mu_j\phi^D_j\}.
\end{aligned}
\end{equation}

\section{UCP for Quasi-static isotropic EMM and ESLS}\label{iso-UCP}\setcounter{equation}{0}

In this section, we prove the UCP for the quasi-static EMM and ESLS.
Based on Section \ref{I-EMM}, the quasi-static EMM and ESLS can be given as
\begin{equation*}%\label{q-eqs}
\text{div}\, \sigma=0\,\,\,\text{in $\Omega\times(0,T)$ with\, $\sigma=\sigma[u,\phi]$},
\end{equation*}
where $\sigma=\sigma[u,\phi]$ is given by \eqref{f4.7}. To avoid heavy mathematical formulae and to make the flow of the proof of the UCP easier to understand, we first consider {the quasi-static EMM with $n=1$} in \eqref{equiv-IBP}. 

If we write
the elasticity tensor $C:=C_1=(C_{pqrs})$ as
\begin{equation*}
C_{pqrs}={\lambda}\delta_{pq}\delta_{rs}+{\mu}(\delta_{qs}
\delta_{pr}+\delta_{ps}\delta_{qr})
\end{equation*}
using the Lam\'e moduli $\lambda:=\lambda_1,\,\mu:=\mu_1$, then \eqref{f4.7} is equivalent to
\begin{equation}\label{f5.1}
{\sigma:=}\sigma[u,\phi]=\lambda(\nabla\cdot u)I_3+2\mu e[u]-(3\lambda+2\mu)\phi^V-2\mu\phi^D.
\end{equation}
with 
\begin{equation}\label{f5.2}
\left\{
\begin{aligned}
\phi^V&=\int_0^te^{-(3\lambda+2\mu)\eta^{-1}(t-s)}\,\frac{3\lambda+2\mu}{3\eta }(\nabla\cdot u)(s)\,ds\, I_3,\\
\phi^D&=\int_0^te^{-2\mu\eta^{-1}(t-s)}\,2\mu\eta^{-1}e[u](s)-e^{-2\mu\eta^{-1}(t-s)}\,\frac{2\mu}{3\eta}(\nabla\cdot u)(s)I_3\,ds.
\end{aligned}
\right.
\end{equation}
Since there are two differential operators $\frac{3\lambda+2\mu}{3}(\nabla\cdot u)I_3$ and $2\mu e[u]-\frac{2\mu}{3}(\nabla\cdot u)I_3$ in \eqref{f5.2}, it is difficult to prove the UCP using equation \eqref{f5.1} as it is. Therefore, in preparation for deriving a Carleman estimate leading to the UCP, we apply the divergence operator and rotation operator to \eqref{f5.1} and increasing the number of dependent variables, we aim to obtain a new equation with the Laplace operator in its main part. To do this, we need to assume that for $\lambda,\,\mu,\,\eta$ not only
$\lambda,\,\mu,\,\eta\in L^\infty(\Omega)$, but also $\lambda,\,\mu,\,\eta\in  C^2(\overline\Omega)$ so that $u\in C^2([0,T]; H^2_{loc}(\Omega))$ by applying the coercivity of the quasi-static EMM and ESLS (see Appendix).

We start by introducing two auxiliary functions $p=\nabla\cdot u$ and $w=\nabla\times u =(w_1,w_2,w_3)^{\mathfrak{t}}$ to have the description
\begin{equation}\label{f5.3}
Ce[u]=\lambda p I_3 + 2\mu e[u],
\end{equation}
and also introduce the notation $U$ to denote $U:=(u,p,w)^{\mathfrak{t}}:\Omega \rightarrow {\mathbb R}^7$ with the components obtained from $u$. We note here that $U\in C^2([0,T];H^2_{loc}(\Omega))$ (see for $V$ in Appendix which is nothing but $U$).
\begin{lemma}\label{f_jem5.1}
We have the description
\begin{equation}\label{f5.4}
\begin{array}{ll}
{\rm diag} (I_3, \nabla\cdot , \nabla \times)\, {\rm diag} (\nabla\cdot(Ce[u]), \nabla\cdot(Ce[u]) , \nabla\cdot(Ce[u]))\\
\qquad\qquad\qquad= D\Delta U+\tilde{A}_1(\nabla_x)U,
\end{array}
\end{equation}
where $D={\rm diag}(\mu,\mu,\mu,\lambda+2\mu,\mu,\mu,\mu)$ and $\tilde{A}_1(\nabla_x)$ are first-order linear operators. Here and hereafter, {$``\rm{diag}(\,\,\,\,)"$} denotes a block diagonal form with diagonal block elements that can be scalars, vectors, tensors, or operators in the bracket, and blockwise multiplications or operations for them are done in natural ways.
\end{lemma}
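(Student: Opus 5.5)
We compute $\nabla\cdot(Ce[u])$ directly and then apply the identity, $\nabla\cdot$ and $\nabla\times$ to it. Throughout we only track principal (second- or third-order) terms and collect everything of lower order into first-order operators acting on $U$; this is legitimate because $\lambda,\mu\in C^2(\overline\Omega)$ and $U\in C^2([0,T];H^2_{loc}(\Omega))$.

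\textbf{Step 1 (the $u$-block).} Using \eqref{f5.3} and $2\nabla\cdot e[u]=\Delta u+\nabla(\nabla\cdot u)$,
\begin{equation*}
\nabla\cdot(Ce[u])=\mu\Delta u+(\lambda+\mu)\nabla p+p\nabla\lambda+2e[u]\nabla\mu .
\end{equation*}
Since $p$ is a component of $U$, the term $(\lambda+\mu)\nabla p$ is first order in $U$. The terms $p\nabla\lambda$ and $2e[u]\nabla\mu$ are also first order in $u$. Hence the first three rows read $\mu\Delta u+(\text{first order in }U)$.

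\textbf{Step 2 (the $p$-row).} We use the alternative form $\nabla\cdot(Ce[u])=(\lambda+2\mu)\nabla p-\mu\nabla\times w+(\text{zeroth-order coefficients}\times\nabla u)$. This follows from the vector identity $\Delta u=\nabla p-\nabla\times w$. Taking the divergence, $\nabla\cdot\nabla\times w=0$, so the principal part is $(\lambda+2\mu)\Delta p$. The remaining terms are:
\begin{itemize}
\item derivatives of coefficients times $\nabla p$ or $\nabla\times w$, which are first order in $U$;
\item $\nabla\mu\cdot\nabla\times w$, which is first order in $w$;
\item divergences of $(\nabla\lambda)p+2e[u]\nabla\mu$, which contain $\nabla\cdot(e[u]\nabla\mu)$ and hence second derivatives of $u$.
\end{itemize}
The key point for the last item is that second derivatives of $u$ are first derivatives of $(p,w)$ up to lower order. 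Indeed, $\partial_k\partial_l u$ can be expressed through $\nabla p,\nabla w$, since $\Delta u$ is and, more precisely, $\partial_l\partial_k u_i$ combinations appearing in $e[u]$ contractions reduce via $2\partial_k e_{ik}=\Delta u_i+\partial_i p$. For a general contraction $\partial_k\mu\,\partial_l e_{kl}$-type term, I would rewrite $e[u]=\nabla u-\tfrac12(\nabla u-\nabla u^{\mathfrak t})$, where the antisymmetric part is expressed by $w$. This gives
\begin{equation*}
\nabla\cdot(e[u]\nabla\mu)=\nabla\mu\cdot\nabla p+\tfrac12\nabla\mu\cdot(\ldots w)+\nabla^2\mu:\nabla u,
\end{equation*}
which is first order in $U$. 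This bookkeeping is the main obstacle. One must check that every second derivative of $u$ arising from a derivative falling on $e[u]$, rather than on the coefficient, indeed appears only through $\nabla(\nabla\cdot u)$ or $\nabla(\nabla\times u)$. The cleanest route is the identity $\partial_j(e[u])_{ij}=\partial_i p-\tfrac12(\nabla\times w)_i$, i.e.\ $\nabla\cdot e[u]=\nabla p-\tfrac12\nabla\times w$, together with symmetric handling of terms $\partial_j\mu\,\partial_i(e[u])_{ij}$.

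\textbf{Step 3 (the $w$-rows).} We take the curl of the form $\mu\Delta u+(\lambda+\mu)\nabla p+\ldots$, using $\nabla\times(\mu\Delta u)=\mu\Delta w+\nabla\mu\times\Delta u$ and $\nabla\times((\lambda+\mu)\nabla p)=\nabla(\lambda+\mu)\times\nabla p$. The term $\Delta u=\nabla p-\nabla\times w$ is first order in $(p,w)$. The curls of $p\nabla\lambda$ and $2e[u]\nabla\mu$ are handled as in Step 2.

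\textbf{Conclusion.} Assembling the blocks gives $D\Delta U+\tilde A_1(\nabla_x)U$ with $D={\rm diag}(\mu,\mu,\mu,\lambda+2\mu,\mu,\mu,\mu)$, where $\tilde A_1$ has coefficients involving up to second derivatives of $\lambda,\mu$, which are bounded by the $C^2$ assumption.
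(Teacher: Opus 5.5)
Your route is the paper's: compute $\nabla\cdot(Ce[u])$ as in (5.5), apply $\nabla\cdot$ and $\nabla\times$, and substitute $\Delta u=\nabla p-\nabla\times w$. Steps 1 and 2 are fine, and your identity $\nabla\cdot e[u]=\nabla p-\tfrac12\nabla\times w$ is what makes the $p$-row work. The gap is in Step 3, at the one place where the lemma has any content.

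First, the general principle you state in Step 2 is false. It says that second derivatives of $u$ are first derivatives of $(p,w)$ up to lower order. Take $u=\nabla(x_1x_2x_3)$. Then $p\equiv 0$ and $w\equiv 0$, and $u(0)=0$, $\nabla u(0)=0$, yet $\partial_2\partial_3u_1=1$. So only special combinations of second derivatives reduce, and each occurrence has to be checked.

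Second, ``handled as in Step 2'' does not cover the curl rows. Write $\nabla u_j$ for the gradient of the component $u_j$. The second-order part of $\nabla\times(2e[u]\nabla\mu)$ is $\sum_j\partial_j\mu\,\nabla\times(\partial_ju+\nabla u_j)$. These are curls of the columns of $2e[u]$, not its divergence, so your identity for $\nabla\cdot e[u]$ does not apply there.

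The missing observation is that $\nabla\times\nabla u_j=0$ and $\nabla\times\partial_ju=\partial_jw$. These give
\[
\nabla\times(2e[u]\nabla\mu)=\sum_{j=1}^3\big(\partial_j\mu\,\partial_jw+\nabla\partial_j\mu\times(\partial_ju+\nabla u_j)\big),
\]
which is first order in $U$ and involves only second derivatives of $\mu$. This is exactly the last term of the paper's (5.8).

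Your alternative splitting also works, namely $2e[u]\nabla\mu=2(\nabla\mu\cdot\nabla)u-w\times\nabla\mu$. But you then still need $\nabla\times\big((\nabla\mu\cdot\nabla)u\big)=(\nabla\mu\cdot\nabla)w+\sum_j\nabla\partial_j\mu\times\partial_ju$, which is the same fact.

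With this inserted, and the false general claim dropped, your argument coincides with the paper's proof.
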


\begin{proof}
By \eqref{f5.3}, we have
\begin{equation}\label{f5.5}
\nabla\cdot(Ce[u])=\mu\Delta u +(\lambda+\mu)\nabla p+(\nabla\lambda) p+(\nabla u+(\nabla u)^{\mathfrak{t}})\nabla\mu.
\end{equation}
Taking the divergence on \eqref{f5.5}, it gives 
\begin{equation}\label{f5.6}
\begin{aligned}
\nabla\cdot \left(\nabla\cdot(Ce[u])\right)=&(\lambda+2\mu)\Delta p+2\nabla\mu\cdot\Delta u
+2(\nabla(\lambda+\mu))\cdot\nabla p\\
&+(\Delta \lambda) p
+\sum_{i,j=1}^3(\partial_j u_i +\partial_i u_j )\partial^2_{ij}\mu,
\end{aligned}
\end{equation}
where $\partial_j:=\partial_{x_j}$ and $\partial_{ij}^2:=\partial_i\,\partial_j$.
By using
\begin{equation*}%\label{f5.8}
\Delta u=\nabla(\nabla\cdot u)-\nabla\times(\nabla\times u)=\nabla p-\nabla\times w
\end{equation*}
in \eqref{f5.5}, we have
\begin{equation}\label{f5.7}
\begin{aligned}
\nabla\cdot \left(\nabla\cdot(Ce[u])\right)=&(\lambda+2\mu)\Delta p+2\nabla\mu\cdot(\nabla p-\nabla\times w)
+(\Delta\lambda) p\\
&+(2\nabla(\lambda+\mu))\cdot\nabla p
+\sum_{i,j=1}^3(\partial_j u_i +\partial_i u_j )\partial^2_{ij}\mu.
\end{aligned}
\end{equation}
Similarly, applying the curl on \eqref{f5.5} and using the identity
$\nabla\times(fu)=f\nabla\times u+\nabla f\times u$ and \eqref{f5.5}, we have
\begin{equation}\label{f5.8}
\begin{array}{ll}
\nabla\times \left(\nabla\cdot(Ce[u])\right)=\mu\Delta w+\nabla\mu\times(\nabla p
-\nabla\times w)+\nabla(\lambda+\mu)\times\nabla p\\
\qquad\quad
+\nabla p\times\nabla\lambda+\sum_{j=1}^3(\partial_{j}\mu\partial_{j}w
+\nabla\partial_{j}\mu\times(\partial_{j}u+\nabla u_j)).
\end{array}
\end{equation}
 Combining  \eqref{f5.6}, \eqref{f5.7} and \eqref{f5.8}, we derive \eqref{f5.4}. 
\end{proof}

\begin{lemma}\label{f_jem5.2}
Let $a=a(x)\in C^2(\Omega)$ be a scalar function. Then, we have that
\begin{equation}\label{f5.9}
\begin{aligned}
&{\rm diag} (I_3, \nabla\cdot , \nabla \times)\, {\rm diag} (\nabla\cdot(a\, Ce[u]), \nabla\cdot(a\, Ce[u]), \nabla\cdot(a\, Ce[u]))\\
=& a D\Delta U+(G)_1(\nabla_x)U,
\end{aligned}
\end{equation}
where $(G)_1(\nabla_x)$ is a first-order linear operator.

\end{lemma}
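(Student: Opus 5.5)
The plan is to reduce Lemma \ref{f_jem5.2} to Lemma \ref{f_jem5.1} by a product rule. Everything then follows from tracking which terms carry second derivatives of $U$. Write
\begin{equation*}
\nabla\cdot(a\,Ce[u])=a\,\nabla\cdot(Ce[u])+(Ce[u])\nabla a .
\end{equation*}
This uses the symmetry of $Ce[u]$. The second term $(Ce[u])\nabla a=\lambda p\nabla a+2\mu e[u]\nabla a$ involves only first derivatives of $u$, with coefficients in $C^1$. So in the first block (the identity block) we already have $a\,\nabla\cdot(Ce[u])+(\text{first order in }U)$.

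For the divergence block, apply $\nabla\cdot$ to the product:
\begin{equation*}
\nabla\cdot\big(a\,\nabla\cdot(Ce[u])\big)=a\,\nabla\cdot\big(\nabla\cdot(Ce[u])\big)+\nabla a\cdot\nabla\cdot(Ce[u]).
\end{equation*}
The last term contains $\mu\Delta u+(\lambda+\mu)\nabla p+\dots$ by \eqref{f5.5}. We rewrite it using $\Delta u=\nabla p-\nabla\times w$, exactly as in the proof of Lemma \ref{f_jem5.1}. It then becomes first order in $U=(u,p,w)$. We also need $\nabla\cdot\big((Ce[u])\nabla a\big)$. Expanding, this gives terms like $\lambda\nabla p\cdot\nabla a$ and $2\mu\sum_{i,j}\partial_j e_{ij}[u]\,\partial_i a$, plus lower-order terms with $\nabla^2 a$, $\nabla\lambda$, $\nabla\mu$. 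The identity $\sum_j\partial_j e_{ij}[u]=\tfrac12(\Delta u_i+\partial_i p)$ lets us express these through $\nabla p$ and $\nabla\times w$, so they are again first order in $U$.

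The curl block is treated the same way, using $\nabla\times(a F)=a\nabla\times F+\nabla a\times F$. The term $\nabla a\times\nabla\cdot(Ce[u])$ and the curl of $(Ce[u])\nabla a$ are reduced to first-order expressions in $U$ via \eqref{f5.5} and $\Delta u=\nabla p-\nabla\times w$. Collecting the three blocks gives $a$ times the left side of \eqref{f5.4}, plus a first-order operator applied to $U$. Lemma \ref{f_jem5.1} then yields $a D\Delta U+a\tilde A_1(\nabla_x)U+(\text{first order})$, and we set $(G)_1:=a\tilde A_1+\cdots$.

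The main obstacle is bookkeeping rather than analysis. We must check that every second derivative of $u$ arising from the product rule can be rewritten through first derivatives of $p$ and $w$, so that nothing of second order in $U$ escapes $aD\Delta U$. A genuine second derivative of $u$ that is not a combination of $\nabla p$ and $\nabla\times w$ would break the lemma. The key identity for the curl block is
\begin{equation*}
\nabla\times\big(e[u]\nabla a\big)=\tfrac12\nabla\times\big((\nabla u)\nabla a\big)+\tfrac12\nabla\times\big((\nabla u)^{\mathfrak t}\nabla a\big).
\end{equation*}
I would expand this component-wise and verify it. On the regularity side, we need $a\in C^2$ and $\lambda,\mu\in C^2$ so that the resulting coefficients are bounded.
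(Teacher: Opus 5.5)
Your proposal is correct and follows essentially the same route as the paper: the product rule $\nabla\cdot(a\,Ce[u])=a\,\nabla\cdot(Ce[u])+(Ce[u])\nabla a$, and the identity $\sum_j\partial_j e_{ij}[u]=\tfrac12(\Delta u_i+\partial_i p)$ for the divergence block, which is the paper's \eqref{f5.12}. For the curl block, the component-wise expansion you defer gives exactly \eqref{f5.13}: the second derivatives in the curl of $\tfrac12\sum_j\partial_i u_j\,\partial_j a$ appear only as $\tfrac12\sum_j\partial_j a\,\nabla\times\nabla u_j=0$, and those in the curl of $\tfrac12\sum_j\partial_j u_i\,\partial_j a$ give $\tfrac12\sum_j\partial_j a\,\partial_j w$; you are also slightly more explicit than the paper in checking, via \eqref{f5.5} and $\Delta u=\nabla p-\nabla\times w$, that the commutator terms $\nabla a\cdot\nabla\cdot(Ce[u])$ and $\nabla a\times\nabla\cdot(Ce[u])$ are first order in $U$.
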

\begin{proof} Note that
\begin{equation}\label{f5.10}
\begin{aligned}\nabla\cdot(a\,Ce[u])=a\,\nabla\cdot(Ce[u])+(Ce[u])\nabla a.
\end{aligned}
\end{equation}
Then, from \eqref{f5.5}, the term
\begin{equation*}
\begin{aligned}
{\rm diag} (I_3, \nabla\cdot , \nabla \times)\, {\rm diag} (a\,\nabla\cdot(Ce[u]), a\,\nabla\cdot(Ce[u]), a\,\nabla\cdot(Ce[u]))
\end{aligned}
\end{equation*}
has the form of \eqref{f5.4} with $D\Delta U$ replaced by $a\,D\Delta U$ and  a first-order linear operator acting on $U$.
Hence, we focus on the second-order derivative terms of $u$ in
\begin{equation}\label{f5.11}
\begin{aligned}
{\rm diag} (I_3, \nabla\cdot , \nabla \times)\, {\rm diag} ((Ce[u])\nabla a, (Ce[u])\nabla a, (Ce[u])\nabla a).
\end{aligned}
\end{equation}
From \eqref{f5.3}, we only need to compute the following two terms
$$ \nabla\cdot(e[2u]\nabla a), \qquad \nabla\times (e[2u]\nabla a).$$ 
The second-order derivative terms of $u$ in $\nabla\cdot(e[2u]\nabla a)$ and $\nabla\times (e[2u]\nabla a)$ are
\begin{equation}\label{f5.12}
\begin{aligned}
 (\nabla\cdot e[2u])\cdot\nabla a=(\Delta u+\nabla p)\cdot\nabla a=(2\nabla p-\nabla\times w )\cdot\nabla a,
\end{aligned}
\end{equation}
and
\begin{equation}\label{f5.13}
\begin{aligned}
 (\nabla\times e[2u])^\mathfrak{t}\nabla a(x)=(\nabla(\nabla \times u)) \nabla a=(\nabla w) \nabla a.
\end{aligned}
\end{equation}
From \eqref{f5.12} and \eqref{f5.13}, we can express the second-order derivative terms of $u$ in \eqref{f5.13} by the first-order derivative terms of $U$.
Hence, from \eqref{f5.4}, \eqref{f5.10} and \eqref{f5.11}, we can derive \eqref{f5.9}.
\end{proof}

In addition, we can handle the integrands of \eqref{f5.2} in the same way. Thus, from \eqref{f5.4} and \eqref{f5.9},  
\begin{equation*}%\label{diag}
{\rm diag} (I_3, \nabla\cdot , \nabla \times)\, {\rm diag} (\nabla\cdot\sigma, \nabla\cdot\sigma, \nabla\cdot\sigma)=0
\end{equation*}
has the following description
\begin{equation}\label{f5.14}
\begin{aligned}
\Delta V(x,t)-\int_0^t  D_0(x,t,s)\Delta V(x,s)\,ds\,=A_1(\nabla_x)V(x,t)+\int_0^t  B_1(\nabla_x)V(x,s)\,ds\, 
\end{aligned}
\end{equation}
for $0\leq t \leq T$, where 
\begin{comment}
$V=(v_1,\cdots,v_7)^\mathfrak{t}$,
\end{comment}
$A_1(\nabla_x)$ and
$B_1(\nabla_x)$ are the first-order linear operators, $D_0(x,t,s)={\rm diag}(d_1(x,t,s),\cdots,d_7(x,t,s))$ with each $d_j(x,t,s)=d_j(x,t-s)$ such that $d_j(x,t)\in C^0(\overline{\Omega}\times[0,T])$ satisfies the estimate $ |d_j(x,t)| \leq b_0 e^{b_1t}$ for $1\leq j\leq 7$ on $\overline{\Omega}\times[0,T]$ for some positive constants $b_0,b_1$. Here, we used $V$ instead of $U$ to write
$V=(v_1,\cdots,v_7)^\mathfrak{t}=(u,p,w)^\mathfrak{t}$ in the form, which is to avoid unnatural descriptions $p=u_4,\,w=(u_5,u_6,u_7)^{\mathfrak{t}}$. We also remark that $\Delta V(x,t)-\int_0^t  D_0(x,t,s)\Delta V(x,s)\,ds$ is decomposed into each component of $V=(v_1,\cdots,v_7)^\mathfrak{t}$. Hence, for this part, we only need to estimate $\Delta z(x,t)-\int_0^t  d(x,t,s)\Delta z(x,s)\,ds$ with $z=v_j$ and $d=d_j$ for $1\leq j\leq 7$.

\medskip
Upon having the description \eqref{f5.14}, we proceed to derive a Carleman estimate for $V$ that satisfies \eqref{f5.14}. For that, let us define a Carleman weight $h_\beta(x)$ by
\begin{equation*}%\label{weight}
h_\beta(x)=h_\beta(|x|)=\exp{\Big (\frac{\beta}{2}(\log|x-x_0|)^2\Big )},\,\,\,x\in {\mathbb R}^3\setminus\{x_0\}
\end{equation*}
with a positive parameter $\beta$. The suppressed $x_0$ becomes clear from the context in the following arguments.
In addition, to simplify the notation, $\int{\cdot}\,dx$ denotes an integral in $\Omega$ for the rest of this section.

First of all, our Carleman estimate is based on the following Carleman estimate given as Theorem 3.1 in \cite{Lin}. 

\begin{lemma}\label{f_lem5.3}
There exist a large number of $\beta_0>0$ and positive constants $a,\,r_0$ such that for any fixed $s$ with $0\leq s \leq T$ and $z(\cdot,s)\in W_{r_0}(x_0)$ with $0<r_0<e^{-1}$, $\beta>\beta_0$, we have
\begin{equation}\label{f5.15}
\beta\int h^2_\beta(x) (|\nabla z(x,s)|^2+|z(x,s)|^2)\, dx
\leq a\int h^2_\beta(x) |\Delta z(x,s)|^2\, dx,
\end{equation}
where $W_{r_0}(x_0):=\{z\in H^{2}(\Omega ): {\rm supp}(z)\subset{B_{r_0}(x_0) \setminus \{x_0\}} \subset \Omega\}$.
\end{lemma}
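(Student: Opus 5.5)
We would prove Lemma \ref{f_lem5.3} by the standard conjugation and positive-commutator method in logarithmic polar coordinates. Since $s$ is fixed, this is a purely spatial estimate for the scalar Laplacian, so we write $z=z(\cdot,s)$ and translate to $x_0=0$. We set $x=e^{\tau}\omega$ with $\omega\in S^2$ and $\tau=\log|x|$. The support condition $\mathrm{supp}\,z\subset B_{r_0}\setminus\{0\}$ with $r_0<e^{-1}$ puts $\tau$ in a compact subset of $(-\infty,\log r_0)$, so $|\tau|>1$ on the support. In these coordinates
\[
|x|^2\Delta=\partial_\tau^2+\partial_\tau+\Delta_\omega,\qquad dx=e^{3\tau}d\tau\,d\omega,\qquad h_\beta^2=e^{\beta\tau^2}.
\]
Hence $\int h_\beta^2|\Delta z|^2dx=\int e^{\beta\tau^2-\tau}\,|(\partial_\tau^2+\partial_\tau+\Delta_\omega)z|^2\,d\tau\,d\omega$.

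We introduce $w:=e^{\beta\tau^2/2-\tau/2}z$ to absorb both the weight and the volume factor. A direct computation gives
\[
e^{\beta\tau^2/2-\tau/2}(\partial_\tau^2+\partial_\tau+\Delta_\omega)z=Pw,\qquad P=S+A,
\]
with
\[
S=\partial_\tau^2+\Delta_\omega+\beta^2\tau^2-\tfrac14+\beta,\qquad A=-2\beta\tau\partial_\tau-\beta .
\]
Up to harmless constant shifts, which are adjusted by checking the exact algebra, $S$ is symmetric and $A$ is skew-symmetric in $L^2(d\tau\,d\omega)$. This gives
\[
\|Pw\|^2=\|Sw\|^2+\|Aw\|^2+([S,A]w,w).
\]
The commutators are $[\partial_\tau^2,-2\beta\tau\partial_\tau]=-4\beta\partial_\tau^2$ and $[\beta^2\tau^2,-2\beta\tau\partial_\tau]=4\beta^3\tau^2$, while $\Delta_\omega$ commutes with $A$. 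Therefore
\[
([S,A]w,w)=4\beta\|\partial_\tau w\|^2+4\beta^3\|\tau w\|^2 .
\]
This controls $\beta\|\partial_\tau w\|^2+\beta^3\|w\|^2$, where $|\tau|>1$ is used in the second term.

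The angular derivatives are not produced by the commutator. We recover them from $\|Sw\|^2$. Pairing $Sw$ with $w$ gives
\[
\|\nabla_\omega w\|^2\le |(Sw,w)|+\|\partial_\tau w\|^2+C\beta^2\|\tau w\|^2,
\]
and we bound $|(Sw,w)|\le \beta^{-1}\|Sw\|^2+\beta\|w\|^2$. Multiplying by $\beta$ and using the bounds already obtained gives $\beta\|\nabla_\omega w\|^2\lesssim\|Pw\|^2$. Because $\tau$ is unbounded, the term $\beta^2\|\tau w\|^2$ must be handled through $\beta^3\|\tau w\|^2$, and this is where $|\tau|\ge 1$ enters. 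We expect this step, gaining a full power of $\beta$ on the angular gradient, to be the main technical point. If the direct pairing loses too much, we would pair $Sw$ with $\beta^{-1}\tau^{-2}w$ or interpolate on each spherical-harmonic mode of $\Delta_\omega$ separately.

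Finally we return to $z$. We have $|\nabla z|^2\simeq |x|^{-2}(|\partial_\tau z|^2+|\nabla_\omega z|^2)$ and $\partial_\tau z=e^{-\beta\tau^2/2+\tau/2}(\partial_\tau w-(\beta\tau-\tfrac12)w)$. Thus $\beta\int h_\beta^2|\nabla z|^2dx$ is bounded by $\beta\int(|\partial_\tau w|^2+|\nabla_\omega w|^2)+\beta^3\int\tau^2|w|^2$, and the factor $e^{3\tau}e^{-2\tau}e^{\tau}=1$ shows the powers of $|x|$ match. The zeroth-order term carries an extra factor $|x|^2\le 1$, which is favourable. Combining these bounds yields \eqref{f5.15} for $\beta>\beta_0$ with a constant $a$ independent of $s$. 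This is consistent with Theorem 3.1 of \cite{Lin}, which we may also simply cite.
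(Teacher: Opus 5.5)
Your argument is correct, and it takes a genuinely different route from the paper. The paper does not prove Lemma~\ref{f_lem5.3}. It quotes Theorem~3.1 of \cite{Lin} for $z\in C_0^\infty(B_{r_0}(x_0)\setminus\{x_0\})$ and passes to $W_{r_0}(x_0)$ by mollification (Remark~\ref{f_jem5.4}).

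You instead prove the estimate from scratch by log-polar conjugation and a positive-commutator argument. This is essentially how estimates with weight $e^{\beta(\log|x|)^2}$ are proved in that literature. Your route is self-contained and shows exactly where $r_0<e^{-1}$ enters: $|\tau|>1$ lets $\beta^3\|\tau w\|^2$ absorb all lower-order terms. It also yields the stronger bound $\beta^3\int(\log|x|)^2|x|^{-4}h_\beta^2|z|^2\,dx\lesssim\int h_\beta^2|\Delta z|^2\,dx$. The citation buys brevity.

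A few corrections to the algebra, none of which affects the conclusion.

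\begin{itemize}
\item With your $w=e^{\beta\tau^2/2-\tau/2}z$, the conjugated operator is
\[
P=\partial_\tau^2+\Delta_\omega+\beta^2\tau^2-2\beta\tau+\tfrac34-\beta+(2-2\beta\tau)\partial_\tau .
\]
So besides constant shifts there are extra terms $2\partial_\tau$ (in $A$) and $-2\beta\tau$ (in $S$). They give
\[
[S,A]=-4\beta\partial_\tau^2+4\beta^3\tau^2-8\beta^2\tau+4\beta .
\]
The new terms are absorbed by $4\beta^3\tau^2$ for large $\beta$, since $|\tau|\ge1$.

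\item Your displayed $S+A$ is, up to the constant $\beta$, the conjugation by $e^{\beta\tau^2/2+\tau/2}$. That choice also works, because $\int h_\beta^2|x|^2|\Delta z|^2dx\le\int h_\beta^2|\Delta z|^2dx$. With it the $|x|$-powers on the gradient side match exactly.

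\item With your stated $w$, the gradient factor is $e^{3\tau}e^{-2\tau}e^{\tau}=e^{2\tau}=|x|^2\le1$, not $1$. This is favourable.

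\item The angular step needs no fallback. Direct pairing gives
\[
\beta\|\nabla_\omega w\|^2\le\beta|(Sw,w)|+2\beta^3\|\tau w\|^2\le\tfrac12\|Sw\|^2+\tfrac12\beta^2\|w\|^2+2\beta^3\|\tau w\|^2 ,
\]
and every term on the right is bounded by $\|Pw\|^2$.

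\item You should say explicitly that the commutator identity is derived for smooth $w$. It extends to $z\in W_{r_0}(x_0)$ by mollification, since $\mathrm{supp}\,z$ stays away from $x_0$ and all terms are continuous in $H^2$ there. This is the same density step as the paper's remark.
\end{itemize}
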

\begin{remark}\label{f_jem5.4}
In \cite{Lin}, \eqref{f5.15} is given for $z(x,s)\in C_{0}^{\infty}(B_{r_0}(x_0) \setminus \{x_0\})$. By cutting $z$ for a small $|x-x_0|$ and regularizing, the estimate \eqref{f5.15} still holds for $z\in W_{r_0}(x_0)$. More precisely, we can let a cut function $\chi(|x-x_0|)$ with support in ${\rm supp}(\chi)\subset{B_{r_0}(x_0)}$. Let $z_\epsilon=(\chi z)*\zeta_\epsilon$ with $\zeta$ be a mollifier and $\zeta_\epsilon=\epsilon^{-3}\zeta(x/\epsilon)$. Since $z_\epsilon\in C_{0}^{\infty}(B_{r_0}(x_0) \setminus \{x_0\})$ converges to $\chi z$ in $H^2$ norm as $\epsilon$ tends to $0$, the estimate \eqref{f5.15} still holds with a different constant $a$.
\end{remark}

The following two lemmas provide estimates for the case where the function $d$ is involved.
\begin{lemma}\label{f_lem5.5} Let $z(x,s)\in L^{\infty}([0,T]; H^2(\Omega))$ and $z(x,s)=0$ in $ B_{r_0}(x_0)\times (0,T)\subset\Omega\times(0,T)$ with $0<r_0<e^{-1}$, we have 
\begin{equation}\label{f5.16}
\begin{aligned}%{ll}
&\int_0^t \,\int h^2_\beta(x)\,\Big |\int_0^s d(x,s,\tau)\Delta z(x,\tau)  d\tau\, \Big |^2\,dxds\\
&\qquad\qquad\qquad\qquad
\leq  \,b_0^2t^2e^{2b_1t}\,\int_0^t \int h^2_\beta(x)\, |\Delta z(x,s)|^2\, dx ds,
\end{aligned}
\end{equation}
where the pair $z,\,d(x,s,\tau)$ represents the pair $v_j,\,d_j(x,t,s)$ for one of $1\leq j\leq 7$.
\end{lemma}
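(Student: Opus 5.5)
The estimate is a pointwise-in-$x$ application of the Cauchy--Schwarz inequality in the memory variable $\tau$, followed by Fubini; the weight $h_\beta^2(x)$ plays no role beyond being nonnegative and independent of $s,\tau$. The hypothesis that $z$ vanishes in $B_{r_0}(x_0)\times(0,T)$ guarantees that the singularity of $h_\beta$ at $x_0$ is harmless, so both sides are finite. Indeed, outside $B_{r_0}(x_0)$ the weight $h_\beta$ is bounded on $\Omega$, and $\Delta z\in L^\infty([0,T];L^2(\Omega))$.

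First, fix $x$ and $s\in[0,t]$. Using $|d(x,s,\tau)|=|d(x,s-\tau)|\le b_0e^{b_1(s-\tau)}\le b_0e^{b_1 t}$ for $0\le\tau\le s\le t$, I estimate
\begin{equation*}
\Big|\int_0^s d(x,s,\tau)\Delta z(x,\tau)\,d\tau\Big|^2\le \Big(\int_0^s|d(x,s,\tau)|^2d\tau\Big)\Big(\int_0^s|\Delta z(x,\tau)|^2d\tau\Big)\le b_0^2 e^{2b_1t}\, s\int_0^t|\Delta z(x,\tau)|^2d\tau .
\end{equation*}
Next, I multiply by $h_\beta^2(x)$, integrate over $\Omega$ in $x$, and then over $s\in(0,t)$. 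Since the right-hand side no longer depends on $s$ except through the factor $s$, and $\int_0^t s\,ds=t^2/2\le t^2$, Fubini/Tonelli (all integrands are nonnegative and measurable) gives
\begin{equation*}
\int_0^t\!\!\int h_\beta^2\Big|\int_0^s d\,\Delta z\,d\tau\Big|^2dx\,ds\le \frac{t^2}{2}\,b_0^2e^{2b_1t}\int_0^t\!\!\int h_\beta^2(x)|\Delta z(x,\tau)|^2dx\,d\tau .
\end{equation*}
This is (\ref{f5.16}), even with an extra factor $1/2$. Alternatively, bounding $s\le t$ directly yields exactly $b_0^2t^2e^{2b_1t}$.

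The only point needing care is justification rather than computation: measurability and finiteness of the integrals. The kernel $d_j$ is continuous on $\overline\Omega\times[0,T]$. Also $\Delta z\in L^\infty(0,T;L^2(\Omega))$, and $h_\beta^2$ is bounded on $\operatorname{supp}z(\cdot,s)\subset\Omega\setminus B_{r_0}(x_0)$. Hence the right-hand side is finite and Tonelli applies. I do not expect any genuine obstacle here. The lemma's role is to absorb the memory term later, for $t$ small, into the left side of the Carleman estimate of Lemma \ref{f_lem5.3}.
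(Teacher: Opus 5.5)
Your proposal is correct and follows essentially the same route as the paper: Cauchy--Schwarz (H\"older) in the memory variable $\tau$ with the exponential bound on $d$, followed by Tonelli/Fubini. The only cosmetic difference is that you enlarge $\int_0^s$ to $\int_0^t$ before integrating in $s$, which gives $t^2/2$. The paper instead swaps the order of integration and uses $\int_\tau^t s\,e^{2b_1 s}\,ds\le t^2e^{2b_1t}$.
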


\begin{proof}
By just estimating $d(x,s,\tau)=d(x,s-\tau)$ from above by $b_0e^{b_1s}$ and using H\"older's inequality, Fubini's theorem, we have
% \begin{equation*}
\begin{align*}
&\int_0^t \,\int h^2_\beta(x)\,|\int_0^s d(s,\tau,x)\Delta z(x,\tau)  d\tau\, |^2\, dxds\\
\leq & \, b_0^2\int_0^t \,s\, e^{2b_1s}\int \,\int_0^s h^2_\beta(x)\,| \Delta z(x,\tau)|^2\,  d\tau dxds\\
= & \, b_0^2\int_0^t \,\int_\tau^t s\,e^{2b_1s}\,(\,\int h^2_\beta(x)\,| \Delta z(x,\tau)|^2\,  dx) ds d\tau \\
\leq & \,b_0^2t^2e^{2b_1t}\,\int_0^t \int h^2_\beta(x) |\Delta z(x,\tau)|^2 dx d\tau\\
= & \,b_0^2t^2e^{2b_1t}\,\int_0^t \int h^2_\beta(x)\, |\Delta z(x,s)|^2\, dx ds,
\end{align*}
% \end{equation*}

\begin{comment}
% \begin{equation*}
\begin{align*}
&\int_0^t \,\int h^2_\beta(x)\,|\int_0^s d(s,\tau,x)\Delta z(x,\tau)  d\tau\, |^2\, dxds\\
\leq&\, b_0^2\int_0^t \,e^{2b_1s}\int h^2_\beta(x)\,(\int_0^s | \Delta z(x,\tau)|  d\tau )^2\,dxds\\
\leq & \, b_0^2\int_0^t \,s\, e^{2b_1s}\int \,\int_0^s h^2_\beta(x)\,| \Delta z(x,\tau)|^2\,  d\tau dxds\\
= & \, b_0^2\int_0^t \,\int_0^s s\,e^{2b_1s}\,(\,\int h^2_\beta(x)\,| \Delta z(x,\tau)|^2\,  dx) d\tau ds\\
= & \, b_0^2\int_0^t \,\int_\tau^t s\,e^{2b_1s}\,(\,\int h^2_\beta(x)\,| \Delta z(x,\tau)|^2\,  dx) ds d\tau \\
\leq & \,b_0^2t^2e^{2b_1t}\,\int_0^t \int h^2_\beta(x) |\Delta z(x,\tau)|^2 dx d\tau\\
= & \,b_0^2t^2e^{2b_1t}\,\int_0^t \int h^2_\beta(x)\, |\Delta z(x,s)|^2\, dx ds,
\end{align*}
% \end{equation*}
\end{comment}
\noindent
which is nothing but \eqref{f5.16}.
\end{proof}

Using \eqref{f5.16}, we also have the following.
\begin{lemma}\label{f_lem5.6}${}$
Let $t,\,z$ be $\,0\leq t \leq T_0:=\min{({1}/{4eb_0},{\ln 2}/{b_1})}$, $z=z(x,s)\in L^{\infty}([0,T]; H^2(\Omega))$ and $z(x,s)=0$ in $ B_{r_0}(x_0)\times (0,T)\subset\Omega\times(0,T)$ with $0<r_0<e^{-1}$. Then, we have  
\begin{equation}\label{f5.17}
\begin{aligned}%{ll}
&\frac{1}{2}\int_0^t \int h^2_\beta(x)\, |\Delta z(x,s)|^2\, dx ds\\
&\leq \int_0^t \,\int h^2_\beta(x)\,\big[  \Delta z(x,s)- \int_0^s d(x,s,\tau)\Delta z(x,\tau)  d\tau \big]^2\,dxds.
\end{aligned}
\end{equation}
\end{lemma}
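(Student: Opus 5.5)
The plan is to write the left-hand side of \eqref{f5.17} as a perturbation of the right-hand side and absorb the Volterra term with the help of Lemma \ref{f_lem5.5}. Set
\begin{equation*}
F(x,s):=\Delta z(x,s)-\int_0^s d(x,s,\tau)\Delta z(x,\tau)\,d\tau,\qquad K(x,s):=\int_0^s d(x,s,\tau)\Delta z(x,\tau)\,d\tau,
\end{equation*}
so that $\Delta z=F+K$. By the elementary inequality $|F+K|^2\le 2|F|^2+2|K|^2$, weighted by $h_\beta^2(x)$ and integrated over $\Omega\times(0,t)$, we get
\begin{equation*}
\int_0^t\int h_\beta^2|\Delta z|^2\,dxds\le 2\int_0^t\int h_\beta^2|F|^2\,dxds+2\int_0^t\int h_\beta^2|K|^2\,dxds .
\end{equation*}
All integrals are finite: $z\in L^\infty([0,T];H^2(\Omega))$, and $h_\beta$ is bounded on $\Omega$ away from $x_0$. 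Since $z$ vanishes in $B_{r_0}(x_0)$, the singularity of $h_\beta$ at $x_0$ plays no role.

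Next, invoke Lemma \ref{f_lem5.5} to bound the $K$-term by $b_0^2t^2e^{2b_1t}\int_0^t\int h_\beta^2|\Delta z|^2$. This yields
\begin{equation*}
\big(1-2b_0^2t^2e^{2b_1t}\big)\int_0^t\int h_\beta^2|\Delta z|^2\,dxds\le 2\int_0^t\int h_\beta^2|F|^2\,dxds .
\end{equation*}
It then suffices to show $2b_0^2t^2e^{2b_1t}\le 1/2$ for $0\le t\le T_0$. That gives the factor $\tfrac12\cdot 2$ on the left, i.e. exactly \eqref{f5.17} after dividing by $2$. The subtraction is legitimate because the left integral is finite.

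The only real point is checking the smallness condition. For $t\le \ln 2/b_1$ we have $e^{2b_1t}\le 4$, and for $t\le 1/(4eb_0)$ we have $b_0^2t^2\le 1/(16e^2)$. Hence $2b_0^2t^2e^{2b_1t}\le 8/(16e^2)=1/(2e^2)<1/2$, which is more than enough. If a sharper constant were wanted, one could instead use $|F+K|^2\ge (1-\varepsilon)|\Delta z|^2-(\varepsilon^{-1}-1)|K|^2$, but the crude splitting already suffices with the stated $T_0$. So the argument has no genuine obstacle beyond keeping track of this numerical constant.
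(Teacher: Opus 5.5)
Your main line does not prove the lemma as stated: the crude splitting cannot give the constant $\frac12$ in \eqref{f5.17}. Write $X:=\int_0^t\int h_\beta^2|\Delta z|^2\,dxds$, $Y:=\int_0^t\int h_\beta^2|F|^2\,dxds$, $Z:=\int_0^t\int h_\beta^2|K|^2\,dxds$ and $c:=b_0^2t^2e^{2b_1t}$. Your inequality is $(1-2c)X\le 2Y$, that is, $\frac{1-2c}{2}X\le Y$.
\begin{itemize}
\item Under your sufficient condition $2c\le\frac12$ this gives only $\frac14X\le Y$, not $\frac12X\le Y$. The sentence about the ``factor $\frac12\cdot 2$ \dots\ after dividing by 2'' is an arithmetic slip.
\item Even your sharper bound $2c\le\frac{1}{2e^2}$ only yields the constant $\frac12\bigl(1-\frac{1}{2e^2}\bigr)<\frac12$.
\end{itemize}
The loss is structural. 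The inequality $|F+K|^2\le 2|F|^2+2|K|^2$ doubles the weight on $|F|^2$, so it can reach $\frac12$ only when $K\equiv0$. Contrary to your last paragraph, the crude splitting does not suffice.

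The refinement you mention in passing is what is actually needed. Note that it should read $|F|^2=|\Delta z-K|^2\ge(1-\varepsilon)|\Delta z|^2-(\varepsilon^{-1}-1)|K|^2$, since $F+K=\Delta z$. Combined with Lemma~\ref{f_lem5.5} it gives $Y\ge\bigl(1-\varepsilon-(\varepsilon^{-1}-1)c\bigr)X$. With $\varepsilon=\frac14$ and $c\le\frac{1}{4e^2}$ the constant is at least $\frac34\bigl(1-\frac{1}{e^2}\bigr)>\frac12$.

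The paper does the same thing in Cauchy--Schwarz form. It expands the square and uses Lemma~\ref{f_lem5.5} as $Z\le cX\le\frac1{16}X$, which holds because $c\le\frac1{4e^2}<\frac1{16}$ for $t\le T_0$:
\[
Y=X-2\int_0^t\int h_\beta^2\,\Delta z\,K\,dxds+Z\ \ge\ X-2X^{1/2}Z^{1/2}\ \ge\ X-\tfrac12X .
\]
For the later use in Theorem~\ref{thm5.7}, a constant such as $\frac14$ would only replace $2a$ by $4a$ and would be harmless. Still, as a proof of the stated inequality your argument falls short until you carry out the $\varepsilon$-version, or this expansion, instead of the crude splitting.
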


\begin{proof}
We put $$g(x,s)=\int_0^s d(x,s,\tau)\Delta z(x,\tau)  d\tau.$$
By using \eqref{f5.16} and H\"older's inequality, we have 
% \begin{equation*}
\begin{align*}
&\frac{1}{2}\int_0^t \int h^2_\beta(x)\, |\Delta z(x,s)|^2\, dx ds\\
=&\int_0^t \int h^2_\beta(x)\, |\Delta z(x,s)|^2\, dx ds-\frac{1}{2}\int_0^t \int h^2_\beta(x)\, |\Delta z(x,s)|^2\, dx ds\\
\leq &\int_0^t \int h^2_\beta(x)\, |\Delta z(x,s)|^2\, dx ds+\int_0^t \,\int h^2_\beta(x)\,|g(x,s)\, |^2\,dxds\\
&-2\,  \left(\int_0^t \int h^2_\beta(x)\, |\Delta z(x,s)|^2 dx ds\right)^{1/2}\cdot\left(\frac{1}{16}\int_0^t \int h^2_\beta(x)\, |\Delta z(x,s)|^2 dx ds\right)^{1/2}\\
\leq &\int_0^t \int h^2_\beta(x)\, |\Delta z(x,s)|^2\, dx ds+\int_0^t \,\int h^2_\beta(x)\,|g(x,s)\, |^2\,dxds\\
&-2\,  \left(\int_0^t \int |h_\beta(x) \Delta z(x,s)|^2 dx ds\right)^{1/2}\cdot\left(\int_0^t \,\int |h_\beta(x)g(x,s)\, |^2\,dxds\right)^{1/2}\\
\leq &\int_0^t \int h^2_\beta(x)\, |\Delta z(x,s)|^2 dx ds+\int_0^t \int h^2_\beta(x)\,|g(x,s)\, |^2 dxds\\
&-2\,  \int_0^t \int \left(h_\beta(x)\, \Delta z(x,s)\right)\cdot \left(h_\beta(x)g(x,s)\,\right) \, dx ds\\
=& \int_0^t \,\int h^2_\beta(x)\,\big[  \Delta z(x,s)- g(x,s)\, \big]^2\,dxds,
\end{align*}
% \end{equation*}
which is nothing but \eqref{f5.17}.
\end{proof}

\medskip
By summing up, we have the following theorem for the case $n=1$.

\begin{comment}
\begin{Th}\label{thm5.7}
Let {\color{red} $V\in C^0([0,T]; H^2_{loc}(\Omega))$} be a solution of equation \eqref{f5.16}   in $\Omega\times(0,T)$ and $V(x,t)=0$ in $B_R(0)\times(0,T)$ with $B_R(0)\subset \Omega$. Then $V(x,t)=0$ in $\Omega\times(0,T)$.
\end{Th}
    
\end{comment}

\begin{Th}\label{thm5.7}
Let $u\in C^2([0,T]; H^1(\Omega))$ be a solution of equation \eqref{equiv-IBP} with \eqref{f5.1}   in $\Omega\times(0,T)$ and $u(x,t)=0$ in $B_R(x_0)\times(0,T)\subset\Omega\times(0,T)$. Then $u(x,t)=0$ in $\Omega\times(0,T)$.
\end{Th}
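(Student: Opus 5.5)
We prove Theorem~\ref{thm5.7} by unique continuation from the Carleman estimate of Lemma~\ref{f_lem5.3}, first on a short time interval $[0,T_0]$ and then on successive intervals. The equation is read as the quasi-static system $\operatorname{div}\sigma[u,\phi]=0$ with $\sigma$ given by \eqref{f5.1}--\eqref{f5.2}. By the interior regularity of the Appendix, $V=(u,p,w)^{\mathfrak t}\in C^2([0,T];H^2_{loc}(\Omega))$, and by Lemmas~\ref{f_jem5.1}, \ref{f_jem5.2} it satisfies \eqref{f5.14}. Since $V$ vanishes on $B_R(x_0)\times(0,T)$, it suffices to show that its zero set is open and closed in $\Omega$, uniformly in $t$. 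By a standard connectedness and chain-of-balls argument, this reduces to the following local step: if $V=0$ on $B_{r_1}(y)\times(0,T)$, then $V=0$ on $B_{r_2}(y)\times(0,T)$ for some fixed $r_2>r_1$, with $r_2$ depending only on the ball of Lemma~\ref{f_lem5.3}. The time localisation is harmless because the memory integrals in \eqref{f5.14} involve only the past.

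For the local step, fix a center $x_0$ with $V=0$ near $x_0$, take $r_0<e^{-1}$ as in Lemma~\ref{f_lem5.3}, and let $\chi$ be a cutoff equal to $1$ on $B_{r_0/2}(x_0)$ and supported in $B_{r_0}(x_0)$. Set $z_j=\chi v_j$; since $V$ vanishes near $x_0$, we have $z_j(\cdot,s)\in W_{r_0}(x_0)$. Multiplying \eqref{f5.14} by $\chi$ gives
\[
\Delta(\chi V)-\int_0^t D_0\Delta(\chi V)\,ds=\chi A_1V+\int_0^t\chi B_1V\,ds+F,
\]
where $F$ collects the commutator terms, which are supported in $r_0/2\le|x-x_0|\le r_0$.

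Take $t\le T_0$ as in Lemma~\ref{f_lem5.6}. Apply that lemma componentwise, then integrate Lemma~\ref{f_lem5.3} over $s\in(0,t)$. This yields
\[
\beta\int_0^t\!\!\int h_\beta^2(|\nabla(\chi V)|^2+|\chi V|^2)\le 2a\int_0^t\!\!\int h_\beta^2\Big|\chi A_1V+\int_0^s\chi B_1V\,d\tau+F\Big|^2 .
\]
The term $\int_0^s\chi B_1V$ is bounded by $C t\int_0^t\!\int h_\beta^2(|\nabla(\chi V)|^2+|\chi V|^2)$, by the same Hölder and Fubini computation as in Lemma~\ref{f_lem5.5}, up to terms supported where $\nabla\chi\neq0$. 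The term $\chi A_1V$ is bounded similarly, without the factor $t$. For $\beta$ large, all of these are absorbed into the left side.

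What remains is
\[
\beta\int_0^t\!\!\int_{B_{r_0/2}} h_\beta^2|V|^2\le C\int_0^t\!\!\int_{\{r_0/2\le|x-x_0|\le r_0\}}h_\beta^2\big(|V|^2+|\nabla V|^2\big).
\]
Since $h_\beta$ is a decreasing function of $|x-x_0|$ for $|x-x_0|<1$, we bound $h_\beta$ on the right by $h_\beta(r_0/2)$ and on the left below by $h_\beta(r)$ on $B_r(x_0)$ for $r<r_0/2$. Letting $\beta\to\infty$ then gives $V=0$ on $B_{r_0/2}(x_0)\times(0,t)$.

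The time restriction $t\le T_0$ is removed by stepping forward. Suppose $V=0$ on $B_{r_0/2}\times(0,kT_0)$. On $[kT_0,(k+1)T_0]$ the memory integrals over $(0,kT_0)$ vanish there, so the same estimate applies with the time origin shifted; the constants $b_0,b_1$ are uniform because $d_j(x,t-s)$ satisfies $|d_j|\le b_0e^{b_1T}$. Finitely many steps cover $[0,T]$. Propagating through a chain of overlapping balls then gives $V=0$, hence $u=0$, in $\Omega\times(0,T)$.

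I expect the main obstacle to be the bookkeeping of the first-order and memory terms. The $B_1$ term must be controlled by the left side with a small factor. Its $\nabla V$ contributions under the time integral must be handled through $\nabla(\chi V)$, which is what the Carleman estimate controls, so the cutoff commutators have to be tracked carefully. Alongside this, we must confirm that $h_\beta$ is monotone on $B_{r_0}(x_0)$, which holds because $r_0<e^{-1}<1$.
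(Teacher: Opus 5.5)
Your proposal is correct and follows essentially the paper's route: derive \eqref{f5.14} for $V=(u,p,w)$, integrate Lin's Carleman estimate (Lemma \ref{f_lem5.3}) in time, control the memory Laplacian by Lemma \ref{f_lem5.6} on $[0,T_0]$, absorb the first-order and memory terms for large $\beta$, push the cutoff commutators into an annulus where $h_\beta$ is smaller, and let $\beta\to\infty$. You are more explicit than the paper about the chain-of-balls and time-stepping steps it calls ``standard''; one small correction there is that after shifting the time origin, the past memory integrals over $(0,kT_0)$ vanish only on $B_{r_0/2}$, not on the annulus, so you must place them in the remainder $F$ (harmless, since $V\in H^2_{loc}$), or first finish the spatial propagation on $(0,kT_0)$ over all of $\Omega$ so that they vanish identically.
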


\begin{proof}

Applying the above two lemmas to each pair $z=v_j$ and $d=d_j$ for $1\le j\le 7$, we derive a Carleman estimate for $V$.
\begin{comment}
We take $z=v_j$ and $d=d_j$ for $1\leq j\leq 7$.
For this $z\in C^0([0,T]; H^2_{loc}(\Omega))$, we further show a Carleman estimate and how we derive the UCP from the estimate by taking $x_0=0$ for simplicity of notation.
\end{comment}
To begin with, let $x_0=0$ for the simplicity of notation and take $\xi(x)\in C^\infty_0(\mathbb{R}^3)$ to be such that $\xi(x)=1$ for $|x|<R$ and $\xi(x)=0$ for $|x|>2R$ ($0<r<R<r_0/2$).
From \eqref{f5.15}, \eqref{f5.17} and \eqref{f5.14}, we have
\begin{equation}\label{f5.18}
\begin{array}{ll}
\sum_{j=1}^7\beta\int_0^t\int_{|x|<R} h^2_\beta(x) (|\nabla v_j(x,s)|^2+|v_j(x,s)|^2) dxds\\
\\
\leq \sum_{j=1}^7\beta\int_0^t\int h^2_\beta(x) (|\nabla (\xi v_j)(x,s)|^2+|\xi v_j(x,s)|^2) dxds\\
\\
\leq  \sum_{j=1}^7a\int_0^t\int h^2_\beta(x) |\Delta(\xi v_j)(x,s)|^2 dxds\\
\\
\leq  \sum_{j=1}^7 2a\int_0^t \,\int h^2_\beta(x)\,\big[  \Delta(\xi v_j )(x,s)- \int_0^s d_j(s,\tau,x)\Delta (\xi v_j)(x,\tau) d\tau\big]^2\,dxds\\
\\
\leq \sum_{j=1}^7 4a \int_0^t \,\int h^2_\beta(x)\,\big[ -\xi |A_1(\nabla_x)V(x,s)|+\xi \int_0^s \,|B_1(\nabla_x)V(x,\tau)|\,d\tau \big]^2\,dxds\\
\\
\,\,+\sum_{j=1}^7 4a\int_0^t \,\int_{|x|>R} h^2_\beta(x)\,\big[  [\Delta,\xi]v_j(x,s)- \int_0^s d_j(s,\tau,x)[\Delta,\xi]v_j(x,\tau) d\tau \big]^2\,dxds,
\end{array}
\end{equation}
where  $[\Delta,\xi]$ is the commutator of $\Delta$ and $\xi$.

From \eqref{f5.18}  and the monotone decreasing property of the weight function with respect to $|x|$, we have
\begin{equation}\label{f5.19}
\begin{aligned}
&\beta\int_0^t\int_{|x|<R} h^2_\beta(x) (|\nabla V(x,s)|^2+|V(x,s)|^2)\, dxds\\
\leq & a_1\int_0^t\int_{|x|<R} h^2_\beta(x)(|\nabla V(x,s)|^2+|V(x,s)|^2)\,dxds\\
&+ a_1\int_0^t\int_0^s\int_{|x|<R} h^2_\beta(x)(|\nabla V(x,\tau)|^2+|V(x,\tau)|^2)\,dxd\tau ds\\
&+a_1\int_0^t\int_{|x|>R} h^2_\beta(R)(|\nabla V(x,s)|^2+|V(x,s)|^2)\,dxds\\
&+ a_1\int_0^t\int_0^s\int_{|x|>R} h^2_\beta(R)(|\nabla V(x,\tau)|^2+|V(x,\tau)|^2)\,dxd\tau ds,
\end{aligned}
\end{equation}
with a positive constant $a_1$ which naturally yields from the constant $a$ in Lemma \ref{f_lem5.3} when providing the above estimates by using the fact that any norms in ${\mathbb R}^7$ are equivalent. 

Now, we estimate the second term on the right-hand side of \eqref{f5.19}.
\begin{equation}\label{f5.20}
\begin{aligned}
&a_1\int_0^t\int_0^s\int_{|x|<R} h^2_\beta(x)(|\nabla V(x,\tau)|^2+|V(x,\tau)|^2)\,dxd\tau ds\\
= &a_1\int_0^t\int_\tau^t\int_{|x|<R} h^2_\beta(x)(|\nabla V(x,\tau)|^2+|V(x,\tau)|^2)\,dx dsd\tau\\
\leq &t\cdot a_1\int_0^t\int_{|x|<R} h^2_\beta(x)(|\nabla V(x,\tau)|^2+|V(x,\tau)|^2)\, dx d\tau\\
\leq &a_1 T_0\int_0^{T_0}\int_{|x|<R} h^2_\beta(x) (|\nabla V(x,s)|^2+|V(x,s)|^2) \,dxds.
\end{aligned}
\end{equation}

Then, let $\beta>4a_1(1+T_0)$. By combining \eqref{f5.19} and \eqref{f5.20}, the first two terms on the right-hand side of \eqref{f5.19} will be absorbed by the left-hand side of \eqref{f5.19}. This leads to the following Carleman estimate.
\begin{equation}\label{f5.21}
\begin{aligned}
&\beta\cdot h^2_\beta(R)\int_0^t\int_{|x|<R}  (|\nabla V(x,s)|^2+|V(x,s)|^2) \, dxds\\
\leq&\beta\int_0^t\int_{|x|<R} h^2_\beta(x) (|\nabla V(x,s)|^2+|V(x,s)|^2) \, dxds\\
\leq 
&2a_1h^2_\beta(R)\int_0^t\int_{|x|>R} (|\nabla V(x,s)|^2+|V(x,s)|^2)\,dxds\\
&+2a_1h^2_\beta(R)\int_0^t\int_0^s\int_{|x|>R} (|\nabla V(x,\tau)|^2+|V(x,\tau)|^2)\, dxd\tau ds.
\end{aligned}
\end{equation}

The local UCP follows by letting $\beta \rightarrow \infty$ in \eqref{f5.21}. That is, $V(x,s)=0$ in $B_R(0)\times[0,T_0]$. In addition, by the standard argument, we have the global UCP given as $V(x,t)=0$ in $\Omega\times(0,T)$. 
\end{proof}

\begin{comment}
    
\begin{proof}
In the integrand of $\phi^V$, it is of the form $\gamma(x)pI$ that can be seen as a case of $Ce[u]=\lambda p I + 2\mu e[u]$ with $\lambda=\gamma$ and $\mu=0$. Similarly, we can
treat the integrand of $\phi^D$ as a case of $Ce[u]$.

Combining \eqref{f5.5} and \eqref{f5.11}, we have
\begin{equation}\label{f5.24}
\begin{aligned}
&{\rm diag} (I_3, \nabla\cdot , \nabla \times)\, {\rm diag} (\nabla\cdot\sigma, \nabla\cdot\sigma, \nabla\cdot\sigma)\\
=& D\Delta U+\tilde{A}_1(\nabla_x)U(x,t)-\int_0^t D_2\Delta U+(G)_1(\nabla_x)U(x,s)\,ds=0,
\end{aligned}
\end{equation}
where $D_2$ is a diagonal matrix and  $(G)_1(\nabla_x)$ is a first-order linear operator.

Since \eqref{f5.24} is a form of \eqref{f5.16}, we can
apply Theorem \ref{thm5.7} to conclude $u(x,t)=0$ in $\Omega\times(0,T)$.
\end{proof}

\end{comment}

Now we consider the general case $n$ in \eqref{equiv-IBP}. If we define
each elasticity tensor $C_j=((C_j)_{pqrs})$ by
$$(C_j)_{pqrs}={\lambda}_j\delta_{pq}\delta_{rs}+{\mu}_j(\delta_{qs}
\delta_{pr}+\delta_{ps}\delta_{qr})$$
and
\begin{equation}\label{f5.22}
C_je[u]=\lambda_j p I + 2\mu_j e[u],
\end{equation}
then $\sigma[u,\phi]$ in \eqref{equiv-IBP} becomes
\eqref{f4.7}.

\begin{Th}\label{f_thm5.8}
Let $u\in C^2([0,T]; H^1(\Omega))$ be a solution of equation equation \eqref{equiv-IBP} with \eqref{f5.22}   in $\Omega\times(0,T)$ and $u(x,t)=0$ in $B_R(x_0)\times(0,T)$ with $B_R(x_0)\subset \Omega$. Then $u(x,t)=0$ in $\Omega\times(0,T)$.
\end{Th}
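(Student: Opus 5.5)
The plan is to reduce the general case to the structure already exploited in Theorem \ref{thm5.7}. Everything hinges on obtaining the same integro-differential system \eqref{f5.14} for $V=(u,p,w)^{\mathfrak t}$.

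\emph{Step 1: the instantaneous part.} The quasi-static equation is $\nabla\cdot\sigma[u,\phi]=0$ with $\sigma$ given by \eqref{f4.7}, and \eqref{f4.6} is inserted for $\phi^V_j,\phi^D_j$. The non-memory part of $\sigma$ is $\sum_{j=1}^n C_je[u]=\Lambda pI_3+2Me[u]$, where $\Lambda:=\sum_j\lambda_j$ and $M:=\sum_j\mu_j$. This is again an isotropic Lam\'e tensor, and \eqref{iso_s-convex} gives $M\ge n\tilde\delta$ and $3\Lambda+2M\ge n\tilde\delta$, hence $\Lambda+2M>0$. Applying ${\rm diag}(I_3,\nabla\cdot,\nabla\times)$ and Lemma \ref{f_jem5.1} with $(\lambda,\mu)$ replaced by $(\Lambda,M)$ then gives $D\Delta V$ plus first-order terms. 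Here $D={\rm diag}(M,M,M,\Lambda+2M,M,M,M)$ is invertible with $C^2$ entries.

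\emph{Step 2: the memory terms.} Each memory integrand is a sum of terms of the form $a(x,t-s)\,C'e[u](s)$. For the volumetric terms, $C'$ has Lam\'e pair $(\gamma,0)$. For the deviatoric terms, $C'$ has Lam\'e pair $(-2\mu_j/3,\mu_j)$, and the kernels are $e^{-k_j(x)(t-s)}$ times $C^2$ coefficients. So Lemma \ref{f_jem5.2} applies pointwise in $s$, with $a(\cdot,t-s)$ smooth in $x$ and with $C^2$ norms bounded by $ce^{c(t-s)}$.

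Lemma \ref{f_jem5.2} does not require convexity of $C'$; it is only the computation of the principal part, which is a diagonal matrix $D'$ times $\Delta V$. One caveat is that with $\mu=0$ the $u$-block principal part $\mu\Delta u$ vanishes, but $(\lambda+\mu)\nabla p$ is still first order in $V$. Summing over $j$ therefore gives a kernel matrix $\tilde D(x,t,s)\Delta V(s)$ plus first-order operators in $V(s)$.

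Multiplying by $D^{-1}$, which is allowed since $D$ is invertible and $C^2$, yields exactly \eqref{f5.14}. The resulting $D_0=D^{-1}\tilde D$ is diagonal with entries $d_j(x,t-s)$ that are sums of at most $n$ exponentials times bounded coefficients, so the bound $|d_j|\le b_0e^{b_1t}$ holds. The regularity $V\in C^2([0,T];H^2_{loc}(\Omega))$ comes from the Appendix as before.

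\emph{Step 3: the Carleman argument.} Once \eqref{f5.14} holds, the proof of Theorem \ref{thm5.7} goes through verbatim: Lemma \ref{f_lem5.3}, then Lemmas \ref{f_lem5.5} and \ref{f_lem5.6} on $[0,T_0]$, then absorption to get \eqref{f5.21}, and letting $\beta\to\infty$ gives local vanishing on $[0,T_0]$. To cover all of $[0,T]$, I repeat the argument on $[T_0,2T_0]$ and so on. On each new interval the memory integral over the past interval vanishes near the relevant region by the previous step, so the same estimate applies with shifted time. A chain-of-balls argument then yields global UCP in $\Omega$, and since $V$ contains $u$, we get $u=0$.

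\emph{Main obstacle.} The main obstacle is Step 2: verifying that, after summation, the principal parts of all memory terms remain \emph{diagonal} in $V$ and differ from $D$ only by scalar kernels. Off-diagonal second-order cross terms, such as $\nabla\mu\cdot\nabla\times w$ appearing from divergences, must be checked to be genuinely first order in $V$. I would verify this by redoing the computations \eqref{f5.12}–\eqref{f5.13} with $x$-dependent coefficients $a(x,t-s)\lambda'$ and $a(x,t-s)\mu'$.
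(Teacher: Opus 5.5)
Your proposal is correct and follows the paper's route. The paper's proof simply asserts that the argument of Theorem \ref{thm5.7} shows $U$ satisfies an equation of the form \eqref{f5.14} and then reruns that Carleman argument; your Steps 1--3 (summing the Lam\'e moduli into $(\Lambda,M)$, handling each memory kernel with Lemma \ref{f_jem5.2}, dividing by the invertible $D$, and stepping in time by $T_0$) supply the details the paper leaves implicit.
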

\begin{proof}
Using the same argument given in the proof of Theorem \ref{thm5.7}, we can show that $U$ satisfies a same type of equation of the form \eqref{f5.14}. Thus, we can
apply Theorem \ref{thm5.7} to conclude $u(x,t)=0$ in $\Omega\times(0,T)$.
\end{proof}

\section{Unique identification of wave speed}\label{uniqueness}\setcounter{equation}{0}

Let the elasticity tensor $C_j=((C_j)_{pqrs})$, $\tilde{C}_j=((\tilde{C}_j)_{pqrs})$ be
\begin{equation*}%\label{f6.1}
\begin{aligned}
(C_j)_{pqrs}={\lambda}_j\delta_{pq}\delta_{rs}+{\mu}_j(\delta_{qs}
\delta_{pr}+\delta_{ps}\delta_{qr})
\end{aligned}
\end{equation*}
and
\begin{equation*}%\label{f6.2}
\begin{aligned}
(\tilde{C}_j)_{pqrs}={\tilde{\lambda}}_j\delta_{pq}\delta_{rs}+\tilde{\mu}_j(\delta_{qs}
\delta_{pr}+\delta_{ps}\delta_{qr})
\end{aligned}
\end{equation*}
for $1\leq r\leq n$.
We assume  that 
\begin{equation}\label{f6.1}
\begin{aligned}
0<\eta_j,\lambda_j, \mu_j, \tilde{\eta}_j,\tilde{\lambda}_j,\tilde{\mu}_j \in C^2(\overline\Omega)\quad {\rm for}\quad 1\leq j\leq n,\quad 0< \rho,\, \tilde{\rho}\in C^1(\overline\Omega).
\end{aligned}
\end{equation}

\begin{Th}\label{f_thm6.1}
Let $\rho,\,\tilde{\rho},\,\lambda_j,\,\tilde{\lambda}_j,\,\mu_j,\,\tilde{\mu}_j,\,\eta_j,\,\tilde{\eta}_j>0$ for $1\leq j\leq n$ satisfy \eqref{f6.1} and the strong convexity condition. Also, let $u\in C^2([0,T]; H^1(\Omega))$ be a common solution for
\begin{equation}\label{f6.2}
\left\{
\begin{array}{l}
\rho\partial_t^2u-\nabla \cdot \sigma[u,\phi]=0 \quad {\rm in}\quad \Omega\times (0,T),\\
\eta_j\partial_t\phi_j-\sigma_j[u,\phi_j]=0 \quad {\rm in}\quad \Omega\times (0,T),\\
u=g\,\,\text{over $\Gamma_D$},\,\,\sigma[u,\phi]\nu=0 \,\,\,\text{over $\Gamma_N$},\\
(u,\partial_tu,\phi)|_{t=0}=(0,0,0),
\end{array}
\right.
\end{equation}
and
\begin{equation}\label{f6.3}
\left\{
\begin{array}{l}
\tilde{\rho}\partial_t^2u-\nabla \cdot \sigma[u,\tilde{\phi}]=0 \quad {\rm in}\quad \Omega\times (0,T),\\
\tilde{\eta}_j\partial_t\tilde{\phi}_j-\sigma_j[u,\tilde{\phi}_j]=0 \quad {\rm in}\quad \Omega\times (0,T),\\
u=g\,\,\text{over $\Gamma_D$},\,\,\sigma[u,\tilde{\phi}]\nu=0 \,\,\,\text{over $\Gamma_N$},\\
(u,\partial_tu,\tilde{\phi})|_{t=0}=(0,0,0),
\end{array}
\right.
\end{equation}
where $\phi=(\phi_1,\cdots,\phi_n)$, $\tilde{\phi}=(\tilde{\phi}_1,\cdots,\tilde{\phi}_n)$.

Then, for any open subset $\omega$ compactly embedded in $\Omega$ satisfying
\begin{equation*}%\label{eq:omega}
\min_{\bar{\omega}}\left \{\left |\sum_{j=1}^n\left (\frac{\mu_j}{\rho}-\frac{\tilde{\mu}_j}{\tilde{\rho}}\right )\right |,\,\left |\sum_{j=1}^n\left (\frac{\lambda_j+2\mu_j}{\rho}-\frac{\tilde{\lambda}_j+2\tilde{\mu}_j}{\tilde{\rho}}\right )\right |\right \}>0,
\end{equation*}
we have $u(x,t)=0$ in $\omega\times (0,T)$. 
\end{Th}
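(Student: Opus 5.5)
The plan is to run the McLaughlin–Yoon shrink and spread argument: Theorem \ref{f_thm3.2} (FSP) is the shrink step and Theorem \ref{f_thm5.8} (UCP for the quasi-static system) is the spread step. The observation that drives everything is that, at each point of $\bar\omega$, the two media propagate with different maximal speeds. The data $u$ are common to \eqref{f6.2} and \eqref{f6.3}, so $u$ must satisfy both systems. The argument uses the difference of the two systems and the ratio of the two dynamical equations. I will argue by contradiction: suppose $u\not\equiv 0$ in $\omega\times(0,T)$.

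\textbf{Step 1 (reduction to a point and a ball).} Fix $x_1\in\bar\omega$. By the hypothesis on $\omega$, one of the speed-type quantities differs between the two media at $x_1$. By continuity, \eqref{f6.1}, it also differs on a small ball $B_\delta(x_1)\subset\Omega$. Let $t_1:=\inf\{t: u(\cdot,t)\not\equiv0 \text{ near } x_1\}$ be the first arrival time of the wave at $x_1$. Since $u=\partial_t u=0$ at $t=0$, a point with $t_1=0$ near $x_1$ would be excluded by Theorem \ref{f_thm3.2} applied with data on balls. I will work in a neighbourhood of the front: the support of $u(\cdot,t)$ reaches $x_1$ at time $t_1$.

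\textbf{Step 2 (front speed is forced by both media).} Theorem \ref{f_thm3.2} (shrink) shows that if $u=\partial_tu=0$ on $B_r(x)$ at time $t$, then $u$ vanishes on the shrinking cone of slope $\alpha$. Here $\alpha$ is determined by the coefficients of the respective medium. Theorem \ref{f_thm5.8} is applied to the equation obtained by subtracting the two dynamical equations after dividing by $\rho$ and $\tilde\rho$:
\[
\frac{1}{\rho}\nabla\cdot\sigma[u,\phi]-\frac{1}{\tilde\rho}\nabla\cdot\sigma[u,\tilde\phi]=0 .
\]
This is a quasi-static relation of the type \eqref{f4.7}, with effective Lamé moduli built from $\mu_j/\rho-\tilde\mu_j/\tilde\rho$ and $(\lambda_j+2\mu_j)/\rho-(\tilde\lambda_j+2\tilde\mu_j)/\tilde\rho$ after the reduction of Lemma \ref{f_jem5.2}, where $a=1/\rho$ and $a=1/\tilde\rho$ are absorbed into lower order terms. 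The principal matrix $D$ in \eqref{f5.14} then has diagonal entries equal, up to sign, to the sums in the hypothesis. On $\omega$ these entries are nonzero, so they can be divided out, and each component satisfies an equation of the form \eqref{f5.14}. Since $u$ vanishes on the spatial region not yet reached by the front (shrink), the spread step applies. It transports this zero set through the connected region where the difference operator is elliptic, namely over $\omega$, and gives $u\equiv0$ in $\omega$ at each time. This holds first on a short interval $[0,T_0]$ from the Carleman estimate (Lemmas \ref{f_lem5.5}, \ref{f_lem5.6}). Restarting at $T_0,2T_0,\dots$ covers $(0,T)$; the memory terms from earlier intervals vanish because $u=0$ there.

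\textbf{Main obstacle.} The delicate point is the ellipticity of the difference system. The hypothesis only asserts that the two sums are nonzero, with possibly different signs. So I must check that the Carleman estimate of Lemma \ref{f_lem5.3} is insensitive to sign. It is: \eqref{f5.14} is decoupled componentwise with scalar coefficients $d_k\Delta$, and $d_k\neq0$ can be divided out, leaving $\Delta$ plus lower order and memory terms. The remaining difficulty is the Lemma \ref{f_lem5.6} requirement that the memory kernel be small. This forces the stepwise-in-time continuation, and one must ensure that the kernel bounds $b_0e^{b_1t}$ stay uniform after dividing by $d_k$. That uniformity holds because $\min_{\bar\omega}|d_k|>0$ and the coefficients are $C^2(\overline\Omega)$. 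Finally, the local UCP must be propagated to the whole of $\omega$ by the standard chain-of-balls argument. This step carries some risk if $\omega$ is not connected to the initial zero region, and I would handle it componentwise on each connected component, using the shrink property to supply the initial vanishing ball.
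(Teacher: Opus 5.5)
Your proposal is correct and follows the paper's proof. FSP (Theorem \ref{f_thm3.2}) with the zero initial data gives $u=0$ on a small ball in $\omega$ for a short time. The difference of the two equations, divided by the nonvanishing principal coefficients $\sum_j(\mu_j/\rho-\tilde\mu_j/\tilde\rho)$ and $\sum_j((\lambda_j+2\mu_j)/\rho-(\tilde\lambda_j+2\tilde\mu_j)/\tilde\rho)$, takes the form \eqref{f5.14}. The UCP of Theorem \ref{f_thm5.8} then spreads the zero set over $\omega$, and you repeat in time.

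Three presentational points. The contradiction/first-arrival-time framing is never used and can be dropped. In Step 1 both sums must be nonzero, not just "one of" them, as your Step 2 correctly uses. At each restart you should say explicitly that FSP is reapplied: $u$, $\partial_t u$, $\phi$ and $\tilde\phi$ vanish in $\omega$ at the restart time, and the FSP time step is uniform, as the paper remarks.
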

\begin{proof}
By Theorem \ref{f_thm3.2}, we have $u(x,t)=0$ in
$B_R(x_0)\times (0,T_1)$ for some $B_R(x_0)\subset\omega$ and a positive constant $T_1<T$. Here, it should be noted that $T_1$ can remain the same even if we change the initial time.

From \eqref{f6.2} and \eqref{f6.3}, we have
\begin{equation}\label{f6.4}
\begin{array}{l}
 \partial_t^2 u=\rho^{-1}\nabla \cdot \left(\sum_{j=1}^n\{C_je[u]-(3\lambda_j+2\mu_j)\phi^V_j-2\mu_j\phi^D_j\} \right)
\end{array}
\end{equation}
and
\begin{equation}\label{f6.5}
\begin{array}{l}
\partial_t^2 u=\tilde{\rho}^{-1}\nabla \cdot \left(\sum_{j=1}^n\{\tilde{C}_je[u]-(3\tilde{\lambda}_j+2\tilde{\mu}_j)\tilde{\phi}^V_j-2\tilde{\mu}_j\tilde{\phi}^D_j\} \right)
\end{array}
\end{equation}
with 
\begin{equation*}
\left\{
\begin{aligned}
\tilde{\phi}_j^V&=\int_0^te^{-(3\tilde{\lambda}_j+2\tilde{\mu}_j)\tilde{\eta}_j^{-1}(t-s)}\,\frac{3\tilde{\lambda}_j+2\tilde{\mu}_j}{3\tilde{\eta}_j }(\nabla\cdot u)(s)\,ds\, I_3,\\
\tilde{\phi}_j^D&=\int_0^te^{-2\tilde{\mu}_j\tilde{\eta}_j^{-1}(t-s)}\,2\tilde{\mu}\tilde{\eta}_j^{-1}e[u](s)-e^{-2{\tilde{\mu}_j}{\tilde{\eta}_j}^{-1}(t-s)}\,\frac{2\tilde{\mu}_j}{3\tilde{\eta}_j}(\nabla\cdot u)(s)I_3\,ds.
\end{aligned}
\right.
\end{equation*}
By subtracting \eqref{f6.5} from \eqref{f6.4}, we obtain
\begin{equation*}
\begin{aligned}%{ll}
&\Big\{\rho^{-1}\nabla \cdot \big (\sum_{j=1}^n C_je[u]\big )-\tilde{\rho}^{-1}\nabla \cdot \big (\sum_{j=1}^n \tilde{C}_je[u]\big )\Big\}\\
&-\Big\{\rho^{-1}\nabla \cdot \big(\sum_{j=1}^n (3\lambda_j+2\mu_j)\phi^V_j-2\mu_j\phi^D_j \big)-\tilde{\rho}^{-1}\nabla \cdot \big(\sum_{j=1}^n (3\tilde{\lambda}_j+2\tilde{\mu}_j)\tilde{\phi}^V_j-2\tilde{\mu}_j\tilde{\phi}^D_j \big)\Big\}=0.
\end{aligned}
\end{equation*}
From \eqref{f5.5}, we have 
\begin{equation}\label{f6.6}
\begin{aligned}
&\rho^{-1}\nabla \cdot \big (\sum_{j=1}^n C_je[u]\big )-\tilde{\rho}^{-1}\nabla \cdot \big (\sum_{j=1}^n \tilde{C}_je[u]\big )\\
=&\sum_{j=1}^n(\frac{\mu_j}{\rho}-\frac{\tilde{\mu}_j}{\tilde{\rho}})\Delta u +\sum_{j=1}^n(\frac{\lambda_j+\mu_j}{\rho}-\frac{\tilde{\lambda}_j+\tilde{\mu}_j}{\tilde{\rho}})\nabla p\\
&+\sum_{j=1}^n(\rho^{-1}\nabla\lambda_j-\tilde{\rho}^{-1}\nabla\tilde{\lambda}_j) p+(\nabla u+(\nabla u)^\mathfrak{t})\sum_{j=1}^n(\rho^{-1}\nabla\mu_j-\tilde{\rho}^{-1}\nabla\tilde{\mu}_j).
\end{aligned}
\end{equation}
By Lemma \ref{f_jem5.2} and the explanation leading to \eqref{f5.14}, we can transform \eqref{f6.6} into a form of \eqref{f5.14}.
From Theorem \ref{f_thm5.8}, we have $u(x,t)=0$ in $\omega\times (0,T_1)$. Recalling what we noted at the beginning of the proof, we obtain $u(x,t)=0$ in $\omega\times (0,T)$ by repeating this argument.
\end{proof}

Theorem \ref{f_thm6.1} yields the following theorem, which is analogous to Theorem 4.2 of \cite{MY}. 

\begin{Th}\label{f_thm6.2}
Let $\rho,\,\tilde{\rho},\,\lambda_j,\,\tilde{\lambda}_j,\,\mu_j,\,\tilde{\mu}_j,\,\eta_j,\,\tilde{\eta}_j>0$ for $1\leq j\leq n$ satisfy \eqref{f6.1} and the strong convexity condition. Also, let $u\in C^2([0,T]; H^1(\Omega))$ be a common solution for \eqref{f6.2} and \eqref{f6.3}.
If $\sum_{j=1}^n(\frac{\lambda_j}{\rho}-\frac{\tilde{\lambda}_j}{\tilde{\rho}})=0$ in $\Omega$, then $\sum_{j=1}^n(\frac{\mu_j}{\rho}-\frac{\tilde{\mu}_j}{\tilde{\rho}})=0$ in $\Omega\setminus\Omega_E$,
where 
\begin{equation*}
\begin{aligned}
\Omega_E :=\cup\{V\subset\Omega\,\,\text{\rm is an open set satisfying}\, u=0\,\,\text{\rm in $V\times(0,T)$}\}.
\end{aligned}
\end{equation*}
\end{Th}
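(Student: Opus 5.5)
Argue by contradiction, in the same spirit as Theorem 4.2 of \cite{MY}, using Theorem \ref{f_thm6.1} as the main tool. Suppose there is a point $x_1\in\Omega\setminus\Omega_E$ with $\sum_{j=1}^n(\frac{\mu_j}{\rho}-\frac{\tilde{\mu}_j}{\tilde{\rho}})(x_1)\neq0$. Since $\lambda_j,\mu_j,\tilde\lambda_j,\tilde\mu_j\in C^2(\overline\Omega)$ and $\rho,\tilde\rho\in C^1(\overline\Omega)$ by \eqref{f6.1}, this combination is continuous. Hence there is a ball $\omega=B_r(x_1)$ with $\overline\omega\subset\Omega$ on which
\[
\Big|\sum_{j=1}^n\Big(\frac{\mu_j}{\rho}-\frac{\tilde{\mu}_j}{\tilde{\rho}}\Big)\Big|\ge c>0 .
\]

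The key step is to check the second quantity in the hypothesis of Theorem \ref{f_thm6.1}. From $\sum_{j=1}^n(\frac{\lambda_j}{\rho}-\frac{\tilde{\lambda}_j}{\tilde{\rho}})=0$ in $\Omega$ we get
\[
\sum_{j=1}^n\Big(\frac{\lambda_j+2\mu_j}{\rho}-\frac{\tilde{\lambda}_j+2\tilde{\mu}_j}{\tilde{\rho}}\Big)=2\sum_{j=1}^n\Big(\frac{\mu_j}{\rho}-\frac{\tilde{\mu}_j}{\tilde{\rho}}\Big),
\]
so this quantity has modulus at least $2c>0$ on $\overline\omega$. Both terms in the minimum are then positive on $\overline\omega$, and $\omega$ is compactly contained in $\Omega$.

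We now apply Theorem \ref{f_thm6.1}, whose remaining hypotheses (regularity \eqref{f6.1}, strong convexity, and $u$ a common solution of \eqref{f6.2} and \eqref{f6.3}) hold by assumption. It gives $u=0$ in $\omega\times(0,T)$. Since $\omega$ is open, $\omega\subset\Omega_E$ by the definition of $\Omega_E$, so $x_1\in\Omega_E$. This contradicts the choice of $x_1$, and therefore $\sum_{j=1}^n(\frac{\mu_j}{\rho}-\frac{\tilde{\mu}_j}{\tilde{\rho}})=0$ on $\Omega\setminus\Omega_E$.

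The only delicate point is the role of the hypothesis on the $\lambda$'s. It is exactly what makes the P-wave-type coefficient difference equal to twice the shear one, so that nondegeneracy of the shear coefficient alone forces both coefficients to be nondegenerate, as Theorem \ref{f_thm6.1} requires. Everything else is the continuity and openness argument above, and this step is immediate once the identity is written down. No new estimate is needed, because all the analytic work (FSP via Theorem \ref{f_thm3.2} and UCP via Theorem \ref{f_thm5.8}) is already contained in Theorem \ref{f_thm6.1}.
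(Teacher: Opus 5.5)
Your proof is correct and is essentially the paper's argument: the paper shows directly that the set $\Omega^1$ where $\sum_{j=1}^n(\frac{\mu_j}{\rho}-\frac{\tilde{\mu}_j}{\tilde{\rho}})\neq0$ lies in $\Omega_E$, by putting around each of its points a neighborhood $\omega$ that satisfies the hypothesis of Theorem \ref{f_thm6.1}, and your contradiction argument is the same reasoning phrased contrapositively. Your identity with the factor $2$ is the correct one (the paper's display drops it, harmlessly), and you also spell out the continuity step that gives $\overline\omega\subset\Omega$ with a positive lower bound on $\overline\omega$, which the paper leaves implicit.
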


\begin{proof}
Let us decompose $\Omega$ into 
\[
\Omega = \Omega^0 \cup \Omega^1,\,\,\Omega^0\cap\Omega^1=\emptyset,
\]
\begin{equation*}
\Omega^0:= \left\{x \in \Omega:\,\sum_{j=1}^n\left(\frac{\mu_j(x)}{\rho(x)}-\frac{\tilde{\mu}_j(x)}{\tilde{\rho}(x)}\right)=0\right\}, \quad
\Omega^1:=\left\{x \in \Omega:\,\sum_{j=1}^n\left(\frac{\mu_j(x)}{\rho(x)}-\frac{\tilde{\mu}_j(x)}{\tilde{\rho}(x)}\right) \neq 0 \right\}.
\end{equation*}
For the proof of the theorem, it is enough to prove $\Omega^1 \subset \Omega_E$. For that, we note the assumption
\[
\sum_{j=1}^n\left(\frac{\lambda_j}{\rho}-\frac{\tilde{\lambda}_j}{\tilde{\rho}}\right)=0\,\,\,\text{in $\Omega$} 
\]
implies
\[
\sum_{j=1}^n\left (\frac{\mu_j}{\rho}-\frac{\tilde{\mu}_j}{\tilde{\rho}}\right )=\sum_{j=1}^n\left (\frac{\lambda_j+2\mu_j}{\rho}-\frac{\tilde{\lambda}_j+2\tilde{\mu}_j}{\tilde{\rho}}\right)\,\,\,\text{in $\Omega$}.
\]
Then, for any $x \in \Omega^1$, there exists an open set $\omega\subset\Omega^1$ with $x\in \omega$, where $\omega$ is defined as in Theorem \ref{f_thm6.1}. Hence, from Theorem \ref{f_thm6.1}, we have $u(x,t) = 0$ for all $(x,t) \in \omega \times (0,T)$. Therefore, $\omega\subset \Omega_E$, and we consequently have $\Omega^1 \subset \Omega_E$.
\end{proof}

\begin{comment}
 \begin{Th}[Unique identification of compressional wave speed]\label{f_thm6.3} Under the same hypothesis in Theorem \ref{f_thm6.2}. If $\sum_{j=1}^n{\color{red}(\frac{\mu_j}{\rho}-\frac{\tilde{\mu}_j}{\tilde{\rho}})}=0$ in $\Omega$, then $\sum_{j=1}^n{\color{red}(\frac{\lambda_j}{\rho}-\frac{\tilde{\lambda}_j}{\tilde{\rho}})}$ in $\Omega \setminus\Omega_{\color{red}E}$.

$$\Omega_{\color{red}D} :=\cup\{V\subset\Omega \,{\rm is\, an\, open\, set\, satisfying}\, \|\nabla \cdot u\|_{L^2(V\times(0,T))}=0\}.$$
\end{Th}   
\end{comment}
\begin{comment}
\begin{equation}\label{f6.7}
\begin{array}{l}
 \partial_t^2 u=\rho^{-1}\nabla \cdot \sum_{j=1}^n C_j\left(e[u]-\int_0^t e^{-(t-s)\,\eta_j^{-1}C_j} \,(\eta_j^{-1}C_j\,e[u(s)])\,ds \right)
\end{array}
\end{equation}
and
\begin{equation}\label{f6.8}
\begin{array}{l}
 \partial_t^2 u=\tilde{\rho}^{-1}\nabla \cdot \sum_{j=1}^n \tilde{C}_j\left(e[u]-\int_0^t e^{-(t-s)\,\tilde{\eta}_j^{-1}\tilde{C}_j} \,(\tilde{\eta}_j^{-1}\tilde{C}_j\,e[u(s)])\,ds \right).
\end{array}
\end{equation}

By subtracting \eqref{f6.8} from \eqref{f6.7}, we obtain
\begin{equation*}%\label{f6.9}
\begin{aligned}
&\rho^{-1}\nabla \cdot \sum_{j=1}^n C_je[u]-\tilde{\rho}^{-1}\nabla \cdot \sum_{j=1}^n \tilde{C}_je[u]\\
-&\rho^{-1}\nabla \cdot \sum_{j=1}^n C_j\left(\int_0^t e^{-(t-s)\,\eta_j^{-1}C_j} \,(\eta_j^{-1}C_j\,e[u(s)])\,ds \right)\\
+&\tilde{\rho}^{-1}\nabla \cdot \sum_{j=1}^n \tilde{C}_j\left(\int_0^t e^{-(t-s)\,\tilde{\eta}_j^{-1}\tilde{C}_j} \,(\tilde{\eta}_j^{-1}\tilde{C}_j\,e[u(s)])\,ds \right)=0.
\end{aligned}
\end{equation*}
\end{comment}

\section{Conclusion and discussion}

This paper presents new research methods and results for dynamic elastography that consider the inherent viscoelastic properties of living tissues. We simplify the stress-strain relationships, in other words, relaxation tensors, for two well-known viscoelastic models (EMM and ESLS) by decomposing viscous strain into volumetric and deviatoric strains. Thereby, we succeeded in establishing a more solid foundation for the uniqueness analysis, proving the uniqueness for dynamic elastography. This uniqueness analysis involves extending J. McLaughlin and J. Yoon's ``shrink and spread" argument for isotropic elastic media to the cases of isotropic viscoelastic EMM and ESLS. Specifically, we provide a new, concise, direct proof for the finite speed of propagation, even for the anisotropic case, and a proof given for the first time for the unique continuation of solutions corresponding to shrink and spread, respectively, for the equations describing the motion. While not necessarily difficult, the derivation of the unique continuation theorem requires the interior regularity of the solution to the initial boundary value problem of viscoelastic equations with mixed-type boundary conditions, and this interior regularity appears to be new as far as we know.

Two directions for future research are proposed. First, current uniqueness analysis relies on full-wavefield data, including shear and compression waves, to uniquely identify medium parameters, whereas current clinical dynamic elastography typically uses only shear-wave signals. Future work should design specialized boundary excitation protocols to generate or isolate shear waves only in the ROI and verify the uniqueness of identifying shear wave speed using only the shear wave data.
Second, this study is limited to isotropic viscoelastic media, while many living tissues (e.g., muscle, tendon, and cardiac tissue) exhibit distinct anisotropic mechanical properties. Extending the proposed mathematical framework to anisotropic viscoelastic media and investigating the uniqueness of wave speed identification in this setting will make the dynamic elastography model more consistent with the actual mechanical properties of living tissues.

\medskip

\section*{Appendix (interior regularity of solutions)}\label{regularity}
\setcounter{equation}{0}
\setcounter{Th}{0}
\renewcommand{\thefigure}{A.\arabic{figure}}
\renewcommand{\theequation}{A.\arabic{equation}}
\renewcommand{\theTh}{A.\arabic{Th}}
In the proof of Theorem \ref{thm5.7}, we applied Lemma \ref{f_lem5.3} and Lemma \ref{f_lem5.5} with Lemma \ref{f_lem5.6} to $V$ such that it belongs to $L^\infty([0,T];H^2(\Omega))$ whenever localized by any cutoff function in $\Omega$. Hence, we need to prove $V\in L^\infty([0,T]; H^2_{loc}(\Omega))$. This Appendix aims to provide its proof.  
We start the proof by introducing the difference quotient in the direction $e_i$ by
\[
\Delta^hu(x)=\Delta_i^hu(x)=h^{-1}\big(u(x+he_i)-u(x)\big), \quad h\neq 0.
\]
From the first equation of \eqref{equiv-IBP} and \eqref{f4.7}, we have 
\begin{equation}\label{f8.1}
\begin{aligned} 
\int_{\Omega}\sum_{j=1}^n\{\lambda_j(\nabla\cdot u)I_3+2\mu_je[u]-(3\lambda_j+2\mu_j)\phi^V_j-2\mu_j\phi^D_j\} : \nabla \zeta\,dx=\int_{\Omega}(\rho\partial_t^2u , \zeta)\,dx
\end{aligned}
\end{equation}
for $\zeta=(\zeta_1,\zeta_2,\zeta_3)^{\mathfrak{t}}\in H^1_0(\Omega)$. 
Let $2h< {\rm dist} ({\rm supp}\,\zeta, \partial \Omega)$, and replace $\zeta$ with its difference quotient $\Delta^{-h}\zeta=\Delta_k^{-h}\zeta$ for some $k$, $1\leq k\leq 3$. Then, we have
\begin{equation}\label{f8.2}
\begin{aligned} 
&\int_{\Omega}\sum_{j=1}^n\Delta^{h}\{\lambda_j(\nabla\cdot u)I_3+2\mu_je[u]-(3\lambda_j+2\mu_j)\phi^V_j-2\mu_j\phi^D_j\} : \nabla \zeta\,dx\\
=&-\int_{\Omega}\sum_{j=1}^n\{\lambda_j(\nabla\cdot u)I_3+2\mu_je[u]-(3\lambda_j+2\mu_j)\phi^V_j-2\mu_j\phi^D_j\} : \nabla \Delta^{-h}\zeta\,dx\\
=&-\int_{\Omega}(\rho\partial_t^2\Delta^{h}u , \zeta)\,dx-\int_{\Omega}(\Delta^{h}\rho)(\partial_t^2u , \zeta)\,dx.
\end{aligned}
\end{equation}
Here, note that we can write 
\begin{equation*}
\left \{
\begin{aligned} 
\sum_{j=1}^n\Delta^{h}\{\lambda_j(\nabla\cdot u)I_3+2\mu_je[u]\}=\sum_{j=1}^n\{\lambda_j\Delta^{h}(\nabla\cdot u)I_3+2\mu_j\Delta^{h}e[u]\}+l_1(x,t,h)\nabla u,\\
\\
\sum_{j=1}^n\Delta^{h}\{(3\lambda_j+2\mu_j)\phi^V_j-2\mu_j\phi^D_j\}=\int_0^tl_2(x,s,h)\Delta^{h}\nabla u(\cdot,s)+l_3(x,s,h)\nabla u(\cdot,s)\,ds,
\end{aligned}\right.
\end{equation*}
where $l_1(x,t,h)$, $l_2(x,s,h)$ and $l_3(x,s,h)$ are measurable multiplication operators uniformly bounded  with respect to either $(x,t,,h)$ and $(x,s,h)$ in $\Omega\times[0,T]\times\{h\not=0\}$.

Now, let $\Omega'\Subset \Omega$ be a domain with a smooth boundary and $2h< d'={\rm dist}(\Omega',\partial\Omega) $. Also, let $\xi\in  H^1_0(\Omega)$ be a cutoff function such that $0\leq\xi\leq 1$ in $\Omega$ and $\xi=1$ in $\Omega'$. Then, the strong convexity condition \eqref{iso_s-convex} yields that for $\zeta=\xi^2\Delta^{h}\nabla u$, 
\[
\int_{\Omega}\sum_{j=1}^n\{\lambda_j\Delta^{h}(\nabla\cdot u)I_3+2\mu_j\Delta^{h}e[u]\}: \xi^2\Delta^{h}\nabla u \,dx\geq \delta \|\xi \Delta^{h}\nabla u(\cdot,t) \|_{L^2(\Omega)}^2
\]
for some positive constant $\delta$. By Young's inequality and Schwarz's inequality, we can also conclude that
\begin{equation*}
\begin{aligned}
\Big |\int_0^t\Delta^{h}\nabla u(\cdot,s)\,ds\,\xi^2\Delta^{h}\nabla u(\cdot,t)\Big |
=&\Big |\xi \Delta^{h}\nabla u(\cdot,t)\Big |\,\Big |\int_0^t\xi\,\Delta^{h}\nabla u(\cdot,s)\,ds\Big |\\
\\
\leq&\frac{\delta}{8}|\xi\Delta^{h}\nabla u(\cdot,t)|^2+8\delta \, \Big |\int_0^t\xi\Delta^{h}\nabla u(\cdot,s)\,ds\Big |^2\\
\leq&\frac{\delta}{8}|\xi\Delta^{h}\nabla u(\cdot,t)|^2+8\delta t\int_0^t|\xi\Delta^{h}\nabla u(\cdot,s)|^2\,ds.
\end{aligned}
\end{equation*}
Then, by using \eqref{f8.1}, \eqref{f8.2}, Schwarz's inequality and Young's inequality, we have
\begin{equation*}%\label{f8.3}
\begin{aligned} 
&\|\xi \Delta^{h}\nabla u(\cdot,t) \|^2_{L^2(\Omega)}\\
\leq &a_2(1+T)\Big (\max_{0\leq t\leq T}\|\partial_t^2u (\cdot,t)\|^2_{H^1(\Omega)}+\max_{0\leq t\leq T}\|u(\cdot,t) \|^2_{H^1(\Omega)}+t\int_0^t\|\xi \Delta^{h}\nabla u(\cdot,s) \|^2_{L^2(\Omega)}ds\Big ),
\end{aligned}
\end{equation*}
where $a_2$ is a positive constant.
Further, applying Gronwall's inequality and taking the maximum over $[0,T]$, we conclude that
\begin{equation*}%\label{f8.4}
\begin{aligned} 
\max_{0\leq t\leq T}\| \Delta^{h}\nabla u(\cdot,t) \|^2_{L^2(\Omega')}
\leq\max_{0\leq t\leq T}\|\xi \Delta^{h}\nabla u(\cdot,t) \|^2_{L^2(\Omega)}
\leq a_3(1+T)
\end{aligned}
\end{equation*}
with a positive constant $a_3$.
Hence, by Lemma 7.24 of \cite{gilbarg2015elliptic} concerning the existence of a weak derivative using the difference quotient, we have obtained
$u\in L^{\infty}([0,T]; H^2(\Omega'))$ for any $\Omega'\Subset\Omega$ with a smooth boundary.

Upon having $u\in L^{\infty}([0,T]; H^2(\Omega'))$, for any $V\in L^{\infty}([0,T]; H^1(\Omega'))$ which satisfies \eqref{f5.14}, the same argument can derive $V\in  L^{\infty}([0,T]; H^2(\Omega''))$ for any $\Omega''\Subset\Omega'$ with a smooth boundary.

\bigskip
{\bf Acknowledgments:} Regarding financial support, the first author was supported by the National Natural Science Foundation of China (No.12241103), the second author was partially supported by the Ministry of Science and Technology of Taiwan, and the third author was partially supported by JSPS KAKENHI (Grant No. JP25K07076). 

\printbibliography
\begin{comment}

\end{comment}

\end{document}